\documentclass[a4pape,two side,11pt]{amsart}
%%%%%%%%%%%%%%%%%%%%%%%%%%%%%%%%%%%%%%%%%%%%%%%%%%%%%%%
\usepackage{lineno,hyperref,amsmath,amsfonts,amsthm}
\usepackage{amssymb,textcomp,epsfig,mathtools}
\usepackage[center]{caption}
\usepackage{xcolor,url}
%\usepackage{fullpage}
%
% \modulolinenumbers[5]
% \hypersetup{
%   colorlinks   = true, %Colours links instead of ugly boxes
%   urlcolor     = red, %Colour for external hyperlinks
%   linkcolor    = blue, %Colour of internal links
%   citecolor   = blue %Colour of citations, could be ``red''
% }
%%%%%%%%%%%%%%%%%%%%%%%%%%%%%%%%%%%%%%%%%%%%%%%%%%%%%%%

%

\newtheorem*{Mthm}{Main Theorem}
\newtheorem{Thm}{Theorem}
\newtheorem{Prop}[Thm]{Proposition}
\newtheorem{Lem}[Thm]{Lemma}
\newtheorem{Cor}[Thm]{Corollary}

\newtheorem*{SunC}{Sun's 1-3-5 Conjecture}

\newtheorem{Rmk}{Remark}

%%%%%%%%%%%%%%%%%%%%%%%%%%%%%%%%%%%%%%%%%%%%%%%%%%%%%%%
%
\def\hh{{\mathcal H}}
\def\ll{{\mathcal L}}

\def\NN{\mathbb{N}}

\def\RR{\mathbb{R}}
\def\ZZ{\mathbb{Z}}

\def\Norm{\mathop{\rm N}\nolimits}
\def\dotp{\boldsymbol{\cdot}}
\usepackage{CJKutf8}
\newcommand{\hanyu}[1]{\begin{CJK}{UTF8}{gbsn}#1\end{CJK}}
\pagestyle{myheadings}
\usepackage{orcidlink}
%%%%%%%%%%%%%%%%%%%%%%%%%%%%%%%%%%%%%%%%%%%%%%%%%%%%%%%
%%%%%%%%%%%%%%%%%%%%%%%%%%%%%%%%%%%%%%%%%%%%%%%%%%%%%%%
\begin{document}

\title{Zhi-Wei Sun's 1-3-5 Conjecture and Variations}

\author[A. Machiavelo\; \&\; N. Tsopanidis]{Ant\'onio
  Machiavelo\,\orcidlink{0000-0002-7595-7275}\;
  \&\; Nikolaos Tsopanidis}%
\address{Centro de Matem\'atica da Universidade do Porto\\ 4169-007
  Porto, Portugal}

\begin{abstract}
  In this paper, using quaternion arithmetic in the ring of Lipschitz
  integers, we present a proof of Zhi-Wei Sun's ``1-3-5 conjecture''
  for integral solutions, and reduce the natural solutions case to its
  verification up to $1.052 \times 10^{11}$. The computational
  verification was performed by the authors and a colleague,
  concluding the proof of Sun's 1-3-5 conjecture.  We also establish
  some variations of this conjecture.
\end{abstract}

\maketitle

\footnotesize

\textbf{Keywords:} Quaternions, Lipschitz integers, 1-3-5
conjecture.

\bigskip \textbf{Warning:} This paper, published in the Journal of
Number Theory 222 (2021) 1–20.
doi\url{:https://doi.org/ 10.1016/j.jnt.2020.10.001} contains an
error that is corrected in an addendum at the end of this document.

\normalsize
%%%%%%%%%%%%%%%%%%%%%%%%%%%%%%%%%%%%%%%%%%%%%%%%%%%%%%%
\section{Introduction}

\thispagestyle{empty}

Lagrange’s four-square theorem states that any
$m\in\NN =\left\lbrace 0, 1, 2, \ldots \right\rbrace$ can be written
as the sum of four integer squares.  Zhi-Wei Sun, see \cite[Conjecture
4.3]{sun2017refining}, made the following conjecture.
\begin{SunC}\label{MainTheorem}
  Any $m\in\NN$ can be written as a sum of four squares,
  $m=x^2+y^2+z^2+t^2$ with $x, y, z, t\in\NN$, in such a way that
  $x+3y+5z$ is a perfect square.
\end{SunC}
Y.-C. Sun and Z.-W. Sun in \cite{sun2018variants} proved that any
$n\in\NN$ can be written as $x^2 + y^2 + z^2 + t^2$ with
$x, y, 5z, 5t \in \ZZ$ such that $x + 3y + 5z$ is a square
(cf. Theorem 1.8 of their paper). Moreover, H.-L. Wu and Z.-W. Sun in
\cite{WuSun2020} showed that any sufficiently large $n\in\NN$ with
$16 \nmid n$ can be written as $x^2 + y^2 + z^2 + t^2$ with
$x, y, z, t \in \ZZ$ such that $x + 3y + 5z\in\lbrace 1,4\rbrace$
(cf. Theorem 1.3(i) of their paper).

We present here a proof of that conjecture for all $m\in\NN$ with
$x, y, z, t\in\ZZ$, and a proof for all $m\not\equiv 0\pmod {16}$
greater than a specific constant, with $x, y, z, t\in\NN$. This,
together with computations done by the authors and Rog\'erio Reis in
\cite{report2020}, which checked the validity of the conjecture up to
that constant, completely proves the 1-3-5 conjecture. Moreover, we
establish some general results that correspond to variations of this
conjecture.

While the previous attempts to attack the conjecture used the theory
of quadratic forms, we use the arithmetic of the subring of Hamilton
quaternions known as the ring of Lipschitz integers,
$$\ll=\{a+bi+cj+dk\mid a,b,c,d\in\ZZ\},$$
where $i^2=j^2=k^2=ijk=-1$.  This ring is neither left nor right
Euclidean, as opposed to the ring of Hurwitz integers
$$\hh=\left\{a+bi+cj+dk\mid a,b,c,d\in\ZZ \, \text{ or } \,
  a,b,c,d\in\frac12+\ZZ \right\},$$
which is more commonly used because of that. Recall that the conjugate
of a quaternion $\alpha=a+bi+cj+dk$ is $$\bar{\alpha}=a-bi-cj-dk,$$
and its norm is $$\Norm(\alpha)=\alpha\bar{\alpha}=a^2+b^2+c^2+d^2.$$
We do not have unique factorization neither in $\ll$ nor in $\hh$, but
given a primitive (i.e.~not divisible by any natural number bigger
than one) Hurwitz integer $Q$, for any reordering of the primes of its
norm factorization into integral primes, $\Norm(Q)=p_0p_1\cdots p_s$,
there is a factorization of $Q$ into a product of Hurwitz primes
$Q = P_0P_1\cdots P_s$ such that
$\Norm(P_0) = p_0,\ldots, \Norm(P_s)=p_s$.  We say that the
factorization $P_0P_1\cdots P_s$ of $Q$ is modeled on the
factorization $p_0p_1\cdots p_s$ of $\Norm(Q)$. Moreover, the
factorization on a given model is unique up to unit-migration (see
\cite[Theorem 2, p.57]{conway2003quaternions}).

In what follows we will be dealing with Lipschitz integers, but in
this smaller ring there is also unique factorization modeled on
factorizations of the norm. In fact, Gordon Pall has proven in
\cite{pall1940arithmetic} that for a Lipschitz integer $v$ which is
primitive modulo $m$ (i.e.~the greatest common divisor of its
coefficients together with $m $ is $1$), where $m\mid N(v)$, $m$ is
odd and positive, there is a unique, up to left multiplication by
units, right divisor of $v$ of norm $m.$ This also holds for even $m$,
provided $v$ is actually primitive and $\frac{\Norm(v)}{m}$ is odd
(see Theorem $1$ and Corollary $1$ in \cite{pall1940arithmetic}).

For our purposes, uniqueness of factorization is not required. We only
need existence, which means that we may drop the condition for a
Lipschitz integer to be primitive, and we will still have a
factorization modeled on any factorization of its norm, including even
factors, because the only primes dividing $2$ in $\ll$ are, up to
associates, $1+i$, $1+j$, and $1+k$, and
$(1+i)(a+bi+cj+dk) = (a+bi-dj+ck) (1+i)$, with similar relations
holding for $1+j$ and $1+k$. Moreover, by Jacobi's $4$-square theorem,
factors of powers of $2$ are reduced to the factorization of $2$ and
$\pm 1\pm i\pm j\pm k$, which can easily be checked to be, up to
associates, products of two of the numbers $1+i$, $1+j$, or $1+k$.
%%%%%%%%%%%%%%%%%%%%%%%%%%%%%%%%%%%%%%%%%%%%%%%%%%%%%%% 
\section{The general setting}

Let $a,b,c,d\in\ZZ$, and $m,n\in\NN$ be given.  Let us start by
describing conditions under which one can guarantee the existence of
$x,y,z,t\in\ZZ$ such that
\begin{equation}
  \label{eq:main}
  \left\{
    \begin{array}{rcl}
      x^2 + y^2 + z^2 + t^2  &=& m\\
      a x + b y + c z + d t &=& n^2 .
    \end{array}
  \right.
\end{equation}

Putting $\gamma = x+yi+zj+tk, \zeta = a+bi+cj+dk\in\ll$, these
equations are equivalent to
\begin{eqnarray}
  \label{eq:Nm}
  \Norm(\gamma)&=& m\\
  \label{eq:dotpn}
  \gamma\dotp\zeta = \Re(\bar{\gamma}\zeta) &=& n^2,
\end{eqnarray}
where the dot denotes here the usual inner product on $\RR^4$.  If one
sets $\delta = \bar{\gamma}\zeta$, it follows from \eqref{eq:dotpn}
that $\delta = n^2 + Ai+Bj+Ck$, for some $A,B,C\in\ZZ$, and
$m \Norm(\zeta) - n^4 = A^2+B^2+C^2$. By Legendre's three-square
theorem, see \cite[pp. 293--295]{legendre1808essai}, or for more recent
proofs see \cite{wojcik1971sums} and \cite{ankeny1957sums}, a
necessary condition for the solvability of \eqref{eq:main} is that one
has
\begin{equation}
  \label{eq:bound}
  n\leq \sqrt[4]{m\, \Norm(\zeta)},
\end{equation}
and that
\begin{equation}
  \label{eq:exceptions}
  m\,\Norm( \zeta) - n^4 \text{ is not of the form } 4^r(8s+7) \text{
  for any } r,s\in\NN.
\end{equation}

Assume now, conversely, that conditions \eqref{eq:bound} and
\eqref{eq:exceptions} are satisfied. Then, again by Legendre's
three-square theorem, there exist $A,B,C\in\ZZ$ such that
$m\, \Norm(\zeta) - n^4 = A^2+B^2+C^2$. Setting
$\delta = n^2+Ai+Bj+Ck$, one has $\Norm(\delta) = m\, \Norm(\zeta)$.
It then follows, by the existence of factorizations modeled on
factorizations of the norm in the ring of Lipschitz integers, that
there exists $\xi,\gamma\in\mathcal{L}$ such that
$\delta = \bar{\gamma}\xi$ and $\Norm(\xi)=\Norm(\zeta)$,
$\Norm(\gamma)=m$. It follows that $\gamma$ is a solution of
\begin{eqnarray}
  \label{eq:Nm2}
  \Norm(\gamma)&=& m\\
  \label{eq:dotpn2}
  \gamma\dotp\xi = \Re(\bar{\gamma}\xi) &=& n^2.
\end{eqnarray}
This proves the following.
\begin{Thm}
  \label{thm:generic}
  Let $m,n,\ell\in \NN$ be such that $n\leq \sqrt[4]{ m\ell}$, and
  assume that $m\ell-n^4$ is not of the form $4^r(8s+7)$ for any
  $r,s\in\NN$. Then, for some $a,b,c,d\in\NN$ such that
  $\Norm(a+bi+cj+dk)=\ell$, the system
  \begin{equation*}
    \left\{
      \begin{array}{rcl}
        m &=& x^2 + y^2 + z^2 + t^2\\
        n^2 &=& a x + b y + c z + d t.
      \end{array}
    \right.
  \end{equation*}
  has integer solutions.
\end{Thm}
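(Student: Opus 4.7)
The plan is to formalize and slightly augment the argument sketched in the two paragraphs immediately preceding the theorem. Starting from the hypotheses, I would first apply Legendre's three-square theorem: since $n^4 \leq m\ell$ and $m\ell - n^4$ avoids the excluded form $4^r(8s+7)$, one can write $m\ell - n^4 = A^2 + B^2 + C^2$ with $A,B,C\in\ZZ$. Setting $\delta = n^2 + Ai + Bj + Ck \in \ll$ then yields $\Norm(\delta) = n^4 + A^2 + B^2 + C^2 = m\ell$.

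Next, I would invoke the existence part of the factorization theorem in $\ll$ modeled on norm factorizations, recalled at the end of Section~1: without any primitivity hypothesis on $\delta$, the decomposition $\Norm(\delta) = m \cdot \ell$ lifts to a factorization $\delta = \bar{\gamma}\,\xi$ in $\ll$ with $\Norm(\gamma) = m$ and $\Norm(\xi) = \ell$. Writing $\gamma = x + yi + zj + tk$ and $\xi = a' + b'i + c'j + d'k$, the norm condition on $\gamma$ supplies the equation $x^2+y^2+z^2+t^2 = m$, while taking real parts in $\bar{\gamma}\xi = \delta$ gives $a'x + b'y + c'z + d't = n^2$, and the norm condition on $\xi$ gives $\Norm(a'+b'i+c'j+d'k) = \ell$.

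At this point the coefficients $a',b',c',d'$ lie in $\ZZ$, whereas the statement demands $a,b,c,d \in \NN_0$. To remedy this, I would set $a = |a'|$ and replace $x$ by $\sgn(a')\,x$ (and similarly for the three other pairs, choosing any sign when a coefficient vanishes). This leaves $x^2+y^2+z^2+t^2 = m$ and $a^2+b^2+c^2+d^2 = \ell$ intact, while preserving the linear equation $ax + by + cz + dt = n^2$ and enforcing non-negativity.

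The only nontrivial ingredient is the existence of a factorization of $\delta$ in $\ll$ modeled on the norm factorization $m\ell$ without a primitivity hypothesis; this is exactly the extension of Pall's theorem noted in the closing paragraph of Section~1 and can be used as a black box. The rest is bookkeeping, and I anticipate no real obstacle.
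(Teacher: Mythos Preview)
Your proposal is correct and matches the paper's proof essentially line for line: apply Legendre's three-square theorem to write $m\ell-n^4$ as $A^2+B^2+C^2$, set $\delta=n^2+Ai+Bj+Ck$, factor $\delta=\bar{\gamma}\xi$ in $\ll$ with $\Norm(\gamma)=m$ and $\Norm(\xi)=\ell$ using the existence result from the end of Section~1, and then flip signs on the coordinates of $\gamma$ to force the coefficients of $\xi$ into $\NN_0$. There is nothing missing and no deviation from the paper's approach.
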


\begin{proof}
  This follows from all that was written above, together with the fact
  that one can change the signs of $x,y,z,t$ so as to make $a,b,c,d$
  non-negative, if they are not already so.
\end{proof}

A direct consequence of Theorem~\ref{thm:generic} is the following.

\begin{Thm}
  Let $\zeta\in\ll$ and $m,n\in\NN$ be such that $\Norm(\zeta)m-n^4$
  is non-negative and not of the form $4^r(8s+7)$, for any
  $r,s\in\NN$. If $\zeta=a+bi+cj+dk\in\ll$, then the system
  \begin{equation*}
    \left\{
      \begin{array}{rcl}
        m &=& x^2 + y^2 + z^2 + t^2\\
        n^2 &=& a x + b y + c z + d t.
      \end{array}
    \right.
  \end{equation*}
  has integer solutions whenever
  \begin{equation*}\Norm(\zeta)=
    \left\{
      \begin{array}{rcl}
        & 1, 3, 5, 7, 11, 15, 23\\
        &2^g\\
        &3\cdot 2^g\\
        &7\cdot 2^g
      \end{array}
    \right.
  \end{equation*}
  where $g$ is odd and positive.
\end{Thm}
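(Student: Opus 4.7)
The plan is to derive this theorem from Theorem~\ref{thm:generic} by establishing that, for each norm $\ell$ in the list, the integer representations of $\ell$ as $a^2+b^2+c^2+d^2$ form a single orbit under the hyperoctahedral group of coordinate permutations and sign changes acting on $\ZZ^4$.

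First I would observe that this orbit property is exactly what is needed to upgrade from ``for some $\zeta$'' (as supplied by Theorem~\ref{thm:generic}) to ``for all $\zeta$''. Indeed, if $(x_0,y_0,z_0,t_0)$ solves the system for coefficients $(a_0,b_0,c_0,d_0)$, then permuting the coefficients by $\sigma\in S_4$ yields a solvable system upon applying the same $\sigma$ to the variables, while flipping the sign of any coefficient is compensated by flipping the sign of the matching variable. Under the hypotheses on $\Norm(\zeta)m-n^4$, Theorem~\ref{thm:generic} supplies at least one $\zeta_0=a_0+b_0i+c_0j+d_0k$ with $a_0,b_0,c_0,d_0\in\NN_0$ and $\Norm(\zeta_0)=\ell$ for which the system is solvable; combining this with single-orbit uniqueness finishes the argument.

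The substantive step is the orbit analysis. For the seven small values $\ell\in\{1,3,5,7,11,15,23\}$ it is a finite enumeration: the unique unordered non-negative representations are $1^2$, $1^2+1^2+1^2$, $2^2+1^2$, $2^2+1^2+1^2+1^2$, $3^2+1^2+1^2$, $3^2+2^2+1^2+1^2$, and $3^2+3^2+2^2+1^2$ respectively. For the three infinite families I would match Jacobi's four-square formula $r_4(n)=8\sum_{d\mid n,\,4\nmid d}d$ against the orbit size of one natural representative: for every odd $k\geq 1$, Jacobi's formula gives $r_4(2^k)=24$, $r_4(3\cdot 2^k)=96$, and $r_4(7\cdot 2^k)=192$, while setting $a=2^{(k-1)/2}$ the orbits of $(a,a,0,0)$, $(2a,a,a,0)$, and $(3a,2a,a,0)$ under the hyperoctahedral group have respective sizes $24$, $96$, and $192$. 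Equality of orbit size with $r_4(\ell)$ forces a single orbit.

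The main obstacle is conceptual rather than technical: recognising why this particular list of norms appears. It is precisely the set of $\ell$ for which $r_4(\ell)$ coincides with the hyperoctahedral orbit size of a single natural representative, equivalently, those $\ell$ admitting an essentially unique representation as a sum of four squares. Once that observation is in hand, the proof reduces to routine bookkeeping with Jacobi's formula together with the transfer of solutions along the orbit, and no ingredient beyond Theorem~\ref{thm:generic} is required.
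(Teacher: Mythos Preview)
Your proposal is correct and follows essentially the same approach as the paper: reduce to Theorem~\ref{thm:generic} by showing that every $\ell$ in the list has a unique representation as a sum of four squares up to order and signs (so that the ``for some $\zeta$'' conclusion upgrades to ``for all $\zeta$''). The only difference is that the paper obtains this uniqueness by citing Lehmer's classification \cite{lehmer1948partition} of the integers with $P_4(\ell)=1$, whereas you verify it directly via enumeration for the sporadic values and Jacobi's formula matched against hyperoctahedral orbit sizes for the three infinite families.
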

\begin{proof}
  Let $\ell\in\NN$, define the \textit{partition number} $P_4(\ell)$
  of $\ell$ into $4$ squares to be
  $$P_4(\ell)=\left|(a_1,a_2,a_3,a_4)\in\NN^4\,\mid\,a_1\geqslant
    a_2\geqslant a_3\geqslant a_4,
    \,\sum_{i=1}^{4}a_i^2=\ell\,\right|$$
  By Theorem~\ref{thm:generic}, it suffices to guarantee that
  $P_4(\Norm(\zeta))=1$, for all $\zeta\in\ll$, with $\Norm(\zeta)$
  running through all the values in the statement, which is true by
    \cite[Theorem 1]{lehmer1948partition}.
\end{proof}
%%%%%%%%%%%%%%%%%%%%%%%%%%%%%%%%%%%%%%%%%%%%%%%%%%%%%%%
\section{The 1-3-5 conjecture}

Let us now consider the existence of integer solutions for the system:
\begin{equation}
  \tag{1-3-5}\label{eq:135}
  \left\{
    \begin{array}{rcl}
      m &=& x^2 + y^2 + z^2 + t^2\\
      n^2 &=& x + 3 y + 5 z.
    \end{array}
  \right.
\end{equation}
Since the only possible Lipschitz integers of norm $35$, up to the
signs and the order of the coefficients, are $1+3i+5j$ and
$1+3i+3j+4k$, Theorem~\ref{thm:generic} immediately yields the
following result.
\begin{Prop}\label{1334or135}
  Let $n\leq\sqrt[4]{35\, m}$ be such that $35 m - n^4$ is not of the
  form $4^r(8s+7)$, for any $r,s\in\NN$. Then either the system
  \eqref{eq:135} has integer solutions, or the system
  \begin{equation}
    \tag{1-3-3-4}\label{eq:1334}
    \left\{
      \begin{array}{rcl}
        m &=& x^2 + y^2 + z^2 + t^2\\
        n^2 &=& x + 3 y + 3 z +4 t.
      \end{array}
    \right.
  \end{equation}
  has integer solutions.
\end{Prop}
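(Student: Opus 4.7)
The plan is to apply Theorem~\ref{thm:generic} directly with $\ell = 35$. The two hypotheses of Proposition~\ref{1334or135}, namely $n \leq \sqrt[4]{35m}$ and $35m - n^4 \not\equiv 4^r(8s+7)$, are precisely the hypotheses of Theorem~\ref{thm:generic} for this choice of $\ell$. That theorem therefore produces some quadruple $(a, b, c, d) \in \NN_0^4$ with $a^2 + b^2 + c^2 + d^2 = 35$ for which the system
$$x^2 + y^2 + z^2 + t^2 = m, \qquad a x + b y + c z + d t = n^2$$
admits integer solutions.

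The next step is a finite enumeration: I would classify all quadruples $(a, b, c, d)$ of non-negative integers with $a^2 + b^2 + c^2 + d^2 = 35$, up to permutation. A short case analysis on the largest entry (it must be $5$, $4$, or at most $3$, but the last is ruled out since $4 \cdot 9 = 36 > 35$ and one checks the remaining budget directly) yields exactly the two multisets $\{0, 1, 3, 5\}$, from $25 + 9 + 1 + 0$, and $\{1, 3, 3, 4\}$, from $16 + 9 + 9 + 1$. This matches the statement that the only Lipschitz integers of norm~$35$, up to signs and order of the coefficients, are $1 + 3i + 5j$ and $1 + 3i + 3j + 4k$.

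To finish, I would use the symmetry of the norm form. The equation $x^2 + y^2 + z^2 + t^2 = m$ is invariant under any permutation of $(x, y, z, t)$, so given an integer solution for the coefficient vector $(a, b, c, d)$ supplied by Theorem~\ref{thm:generic}, applying the inverse of the permutation that sends $(a, b, c, d)$ to $(1, 3, 5, 0)$ (respectively $(1, 3, 3, 4)$) converts the linear constraint into $x + 3y + 5z = n^2$ (respectively $x + 3y + 3z + 4t = n^2$), giving integer solutions of \eqref{eq:135} or of \eqref{eq:1334}. There is no real obstacle in this argument: all the substance has been packaged into Theorem~\ref{thm:generic}, and the only work left is the elementary enumeration of the four-square representations of $35$.
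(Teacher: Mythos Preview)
Your proof is correct and follows exactly the paper's approach: the paper states just before the proposition that the only Lipschitz integers of norm $35$, up to signs and order of coefficients, are $1+3i+5j$ and $1+3i+3j+4k$, and then says Theorem~\ref{thm:generic} immediately yields the result. You have simply made the enumeration and the permutation argument explicit.
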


Define $\mathcal{R} (P)$ to be the set of all Lipschitz integers
obtained from $P\in\ll$, by permuting and changing the signs of its
coordinates. For $\alpha,\alpha'\in\ll$, we say that $\alpha'$ is in
the same \emph{decomposition class} as $\alpha$, and write
$ \alpha'\sim \alpha$, if
$\mathcal{R} (\alpha')=\mathcal{R} (\alpha)$.

\textbf{From now on}, we set $\alpha = 1+3i+5j$ and
$\beta=1+3i+3j+4k$.  In sections $4,5,6$ and $7$ we will prove that
the system \eqref{eq:135} always has a solution for all $m\in\NN$ with
$x,y,z,t\in\ZZ$. The natural solution case will be handled in the last
section. The biggest part of this paper will be focused on proving the
following theorem.
\begin{Thm}\label{Thm135int}
  Let $m,n\in\NN$ be such that $35m-n^4$ is non-negative and not of
  the form $4^r(8s+7)$, for any $r,s\in\NN$. Then
  \begin{itemize}
  \item[i)]If $m\equiv 0\pmod 3$, then the system \eqref{eq:135} has
    integer solutions whenever $n\not\equiv 0\pmod 3$ and
    $n\not\equiv 0 \pmod 5$, i.e.~$(n,15)=1$.
  \item[ii)] If $m\equiv 1 \pmod 3$, then the system \eqref{eq:135}
    has integer solutions whenever $n\equiv 0\pmod 3$ and
    $n\not\equiv 0 \pmod 5$.
  \item[iii)]If $m\equiv -1\pmod 3$, then the system \eqref{eq:135}
    has integer solutions whenever $n\not\equiv 0\pmod 3$,
    $n\not\equiv 0 \pmod 5$ and $n\not\equiv 0 \pmod 7$,
    i.e.~$(n,105)=1$.
  \end{itemize}
\end{Thm}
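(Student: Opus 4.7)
The plan is to build on Proposition~\ref{1334or135}, which under the hypothesis on $m$ and $n$ already supplies an integer solution to either \eqref{eq:135} or \eqref{eq:1334}. The whole task therefore reduces to \emph{excluding} the $\beta$-case---equivalently, converting a 1-3-3-4 solution into a 1-3-5 one---under each of the three congruence regimes (i), (ii), (iii). The natural way to effect this switch is to return to the quaternion $\delta = n^2 + Ai + Bj + Ck$ with $\Norm(\delta) = 35m$ used in the proof of Theorem~\ref{thm:generic}. Every decomposition $35m - n^4 = A^2 + B^2 + C^2$ yields, via the Lipschitz factorization modelled on $\Norm(\delta) = 35 \cdot m$, a right divisor $\xi$ of $\delta$ of norm $35$, and $\xi$ lies either in $\mathcal{R}(\alpha)$ or in $\mathcal{R}(\beta)$. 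The crucial point is that as $(A, B, C)$ runs over the (typically many) representations of $35m - n^4$ as a sum of three squares, the resulting $\xi$ can jump between the two classes, so the problem becomes to ensure that \emph{some} admissible triple lands in $\mathcal{R}(\alpha)$.

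To carry out that forcing, I plan to reduce everything modulo $3$, $5$, and (in case (iii)) $7$, the primes entering $3 \cdot \Norm(\alpha) = 105$. Since $5$ and $7$ divide $\Norm(\alpha) = \Norm(\beta)$, the quaternions $\alpha$ and $\beta$ become rank-one zero divisors in the split quaternion algebras over $\FF_5$ and $\FF_7$, so the $\mathcal{R}$-class of a norm-$35$ right divisor of $\delta$ should be detectable by a sign-pattern or parity invariant of its coordinates modulo $5$ and $7$. Through the relation $\delta = \bar\gamma\xi$, this invariant is then read off from $(A, B, C) \bmod 35$ together with $\Norm(\gamma) = m \bmod 3$. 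The mod-$3$ hypothesis on $m$ fixes $\delta \bmod 3$ (case (iii) in particular forces $35m - n^4 \equiv 0 \pmod 3$, hence $3 \mid A, B, C$, which is the source of the extra rigidity there), while the hypotheses $(n,15) = 1$, $3 \mid n$ with $5 \nmid n$, and $(n,105) = 1$, combined with Legendre's three-square theorem, provide exactly the freedom in $(A, B, C)$ needed to land in the desired residue class.

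The main obstacle is this last step: one must first classify which residue classes of $(A, B, C)$ modulo $105$ produce $\xi \sim \alpha$ rather than $\xi \sim \beta$, and then show that, under each scenario, the set of triples satisfying $A^2 + B^2 + C^2 = 35m - n^4$ contains a representative of the ``good'' class. The first half reduces to bookkeeping of unit migration in the norm-$5$ and norm-$7$ Lipschitz factors, but the second half requires a genuine three-squares lifting statement, essentially saying that $35m - n^4$ is representable as $A^2 + B^2 + C^2$ with $(A, B, C)$ in any locally admissible residue class; this is where I expect the delicate work to lie. The need for the extra restriction $7 \nmid n$ in (iii) (and its absence in (i) and (ii)) should be a direct manifestation of this local-to-global obstruction, and is presumably the reason the paper devotes four sections to the proof.
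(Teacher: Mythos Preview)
Your plan diverges from the paper's argument in a fundamental way, and the divergence creates a real gap.

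The paper does \emph{not} vary the triple $(A,B,C)$ over different three-square representations of $35m-n^4$. It fixes one such triple, hence one $\delta=n^2+Ai+Bj+Ck$ and one factorization $\delta=\gamma_0\beta$ giving a \eqref{eq:1334} solution, and then \emph{transforms the factorization} by quaternion conjugation: for primes $\rho$ of norm $3$, $5$, or $7$ one writes $\rho^{-1}\delta\rho=(\rho^{-1}\gamma_0\sigma)(\sigma^{-1}\beta\rho)$, and precomputes which $\rho$ send $\beta$ into $\mathcal{R}(\alpha)$ and which send it back into $\mathcal{R}(\beta)$. Proposition~\ref{prop:criterion} turns the requirement $\rho^{-1}\gamma_0\sigma\in\ll$ into a single linear congruence on $(x_0,y_0,z_0,t_0)$ modulo $p$. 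The paper then checks, under each hypothesis (i)--(iii), that either one of the $\alpha$-producing congruences holds, or one of the $\beta$-producing congruences holds and the resulting \emph{new} \eqref{eq:1334} solution satisfies an $\alpha$-producing congruence for a different prime. This is a finite, purely algebraic case analysis; no lifting statement about sums of three squares is needed or used.

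Your route instead pushes all the difficulty into the claim that $35m-n^4$ admits a representation $A^2+B^2+C^2$ with $(A,B,C)$ in a prescribed residue class modulo $105$. That is a genuine arithmetic statement of Linnik/Duke type, and nothing in the hypotheses of Theorem~\ref{Thm135int} gives it to you: Legendre's theorem only tells you that \emph{some} representation exists, not that representations equidistribute over local classes. For small $35m-n^4$ there may be very few representations, so the lifting can simply fail; and even asymptotically, the known equidistribution results are ineffective and far heavier than anything the paper invokes. Since you yourself flag this step as ``where the delicate work lies'' but offer no mechanism for it, the proposal as written is a strategy outline with its central lemma missing.

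One smaller point: in case~(iii) you write that $35m-n^4\equiv 0\pmod 3$ forces $3\mid A,B,C$. It does not; $A^2+B^2+C^2\equiv 0\pmod 3$ also allows $A,B,C$ all nonzero modulo $3$, and the paper (Proposition~\ref{mod3bad case}) has to handle both possibilities.
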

Since the condition ``$35m-n^4$ is not of the form $4^r(8s+7)$, for
any $r,s\in\NN$'' holds often enough, this theorem shows more than
what the integer case of the 1-3-5 conjecture asserts.
As it is suggested from the statement of the theorem, we need to work
modulo $3,5$ and $7$.

Let us now establish the framework within which we are going to
work. We assume that $m,n\in\NN$ are such that $35 m - n^4$ is
non-negative and not of the form $4^r(8s+7)$, for any $r,s\in\NN$. Like
in the first section, this implies that there exist $A,B,C\in\NN$
such that $35 m - n^4=A^2+B^2+C^2$. Letting
$\delta=n^2+Ai+Bj+Ck\in\ll$, we have that $\Norm(\delta)=35m$, and
therefore there exist $\zeta,\gamma\in\ll$ with $\Norm(\zeta)=35$ and
$\Norm(\gamma)=m$, such that $\delta=\gamma\zeta$. Then
$\Norm(\zeta)=35$, and so $\zeta\sim\beta$ or $\zeta\sim\alpha$.

If $\zeta\in\mathcal{R}(\alpha)$ then the system \eqref{eq:135} has
integer solutions and we are done.  If $\zeta\in\mathcal{R}(\beta)$,
then there exist a $\gamma'$, obtained from appropriate sign and
coefficient changes of $\gamma$, with
$\Norm(\gamma')=\Norm(\gamma)=m$, such that
$\Re(\gamma'\beta)=\Re(\gamma\zeta)=n^2$. Therefore, we may assume,
without loss of generality, that $\zeta=\beta$. Let
$\gamma=x-yi-zj-tk$. Performing the multiplication $\gamma\beta$
yields
$$\delta=(x+3y+3z+4t)+(3x-y-4z+3t)i+(3x+4y-z-3t)j+(4x-3y+3z-t)k,$$
so we have, for future reference:
\begin{equation}\label{eq:ABC2}
  \left\{
    \begin{array}{rcl}
      n^2 &=& x + 3 y + 3 z +4 t\\
      A &=& 3 x - y - 4 z + 3 t\\
      B &=& 3 x + 4 y - z - 3 t\\
      C &=& 4 x - 3 y + 3 z - t.
    \end{array}
  \right.
\end{equation}
% Solving \eqref{eq:ABC2} for $x,y,z,t$, we get
% \begin{equation*}
%   \left\{
%     \begin{array}{rcl}
%       x &=& \frac{3A+3B+4C+n^2}{35}\\
%       y &=& \frac{-A+4B-3C+3n^2}{35}\\
%         %       z &=& \frac{-4A-B+3C+3n^2}{35}\\
%         %       t &=& \frac{3A-3B-C+4n^2}{35}.
%     \end{array}
%   \right.
% \end{equation*}

We now point out the main idea behind what is going to be done in the
next sections:
\begin{Rmk}
  \label{Remark}
  For any $\rho\in\ll\setminus\{0\}$, one has
  $\Re(\rho^{-1}\delta\rho) = \Re(\delta)$ and
  $\Norm(\rho^{-1}\gamma\rho)=\Norm(\gamma)$. Since, for any
  $\sigma\in\ll\setminus\{0\}$,
  \begin{equation*}
    \rho^{-1}\delta\rho = \rho^{-1}\gamma\beta\rho =
    \rho^{-1}\gamma\sigma\,\sigma^{-1}\beta\rho, 
  \end{equation*}
  we see that if one can find $\rho,\sigma\in\ll\setminus\{0\}$ such
  that $\sigma^{-1}\beta\rho=\alpha'$ and
  $\rho^{-1}\gamma\sigma\in\ll$, with $\alpha'\in\mathcal{R}(\alpha)$
  and $\Norm(\rho)=\Norm(\sigma)$, then from a solution
  $(x,y,z,t)\in\ZZ^4$ for \eqref{eq:1334} one can obtain a solution in
  $\ZZ^4$ for \eqref{eq:135}.
\end{Rmk}

We will be using this in the case where
$\Norm(\rho) = \Norm(\sigma) = p$, an odd prime, and in order to apply
this remark, we will need conditions on $\gamma$ that guarantee
$\rho^{-1}\gamma\sigma\in\ll$, which is the same as
$\bar{\rho}\gamma\sigma\equiv 0\pmod{p}$. Those conditions can be
obtained by using Corollary~8 in \cite{pall1940arithmetic}, which will
be here applied in the following way. Since $\bar{\rho} \gamma\sigma$
and $\bar{\rho}\sigma$ have the same right and left divisors of norm
$p$, \textbf{when $\bar{\rho}\sigma$ is primitive modulo $p$}, Pall's
result implies that there is a $k_\gamma\in\ZZ$ such that
$\bar{\rho} \gamma\sigma \equiv k_\gamma \bar{\rho} \sigma \pmod{p}$.
But then, taking the conjugate congruence, adding both, and using the
fact that $\Re(rs) = \Re(sr)$ for all $r,s\in\ll$, one gets
$$ \gamma\cdot \rho\bar{\sigma} \equiv k_\gamma\,
\rho\cdot \sigma \pmod{p}.$$
When $\rho\cdot\sigma\not\equiv 0 \pmod{p}$, which is the same as
$p\nmid \Re(\bar{\rho}\sigma)$, then one has
$k_\gamma \equiv\frac{\gamma\cdot \rho\bar{\sigma}}{\rho\cdot\sigma}
\pmod{p}$, and one concludes that
\begin{equation}
  \label{eq:Riesz1}
  \bar{\rho} \gamma\sigma \equiv \frac{1}{\rho\cdot\sigma}\,
  (\gamma\cdot\rho\bar{\sigma})\, \bar{\rho} \sigma \pmod{p}, \text{
    for all } \gamma\in\ll.
\end{equation}

If $\rho\cdot\sigma\equiv 0 \pmod{p}$, then
$\gamma\cdot \rho\bar{\sigma} \equiv 0 \pmod{p}$ for all
$\gamma\in\ll$, and in particular
$\rho \bar{\sigma}\equiv 0 \pmod{p}$. Hence $\sigma = u \rho$, for
some $u\in\ll^*$. Before dealing with this possibility, consider the
case \textbf{when $\bar{\rho}\sigma$ is not primitive modulo $p$}. This
means that $\sigma$ is a right associate of $\rho$, and so we can
assume, without loss of generality, that $\sigma=\rho$. Using
coordinates, we can explicitly see that, also in this case,
$\bar{\rho} \gamma\rho$ has proportional coordinates modulo $p$, and
thus, as above, there are $\varepsilon,\delta\in\ll$ such that
$\bar{\rho} \gamma\rho \equiv (\gamma\cdot\varepsilon)\, \delta
\pmod{p}$, for all $\gamma\in\ll$. If $\rho=a+bi+cj+dk$, with
$c^2+d^2\neq 0$, it can be seen that one can take
$\delta=(a^2+b^2)i+(bc-ad)j+(ac+bd)k$, and $\varepsilon$ can then be
easily computed for any given $\rho$. Finally, if $\rho=a+bi$, with
$b\neq 0$, one can take $\delta=2aj-2bk$ and $\varepsilon = aj+bk$.

Finally, for the case $\sigma = u \rho$, with $u\in\ll^*$, one applies
what we just saw to $\gamma u$, to obtain $\varepsilon, \delta$ such
that
$\bar{\rho} \gamma \sigma = \bar{\rho} \gamma u\rho \equiv (\gamma
u\cdot\varepsilon)\, \delta \equiv (\gamma\cdot\varepsilon\bar{u})\,
\delta \pmod{p}$.

Thus, the following holds:
\begin{Prop}
  \label{prop:criterion}
  Given $\rho, \sigma\in\ll$ with norm $p$, then there are
  $\varepsilon,\delta\in\ll$ such that
  $\bar{\rho} \gamma\sigma \equiv (\gamma\cdot\varepsilon)\, \delta
  \pmod{p}$, for all $\gamma\in\ll$. Moreover, for any $\rho, \sigma$,
  one can easily compute $\varepsilon$, which then yields the
  following criterion:
  $$\rho^{-1} \gamma\sigma\in\ll \iff \gamma\cdot
  \varepsilon \equiv 0\pmod{p}.$$
\end{Prop}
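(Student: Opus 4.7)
The plan is to assemble into a single self-contained argument the three case analyses laid out in the paragraph preceding the statement. Note first that $\rho^{-1}\gamma\sigma=\bar{\rho}\gamma\sigma/p$, so $\rho^{-1}\gamma\sigma\in\ll$ is equivalent to $\bar{\rho}\gamma\sigma\equiv 0\pmod{p}$; thus, once one has exhibited $\varepsilon,\delta\in\ll$ with $\delta\not\equiv 0\pmod{p}$ satisfying $\bar{\rho}\gamma\sigma\equiv(\gamma\cdot\varepsilon)\,\delta\pmod{p}$ for every $\gamma\in\ll$, the criterion follows immediately. The task therefore reduces to exhibiting such $\varepsilon$ and $\delta$ in each possible configuration of $\rho$ and $\sigma$.

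First I would dispatch the ``generic'' case in which $\bar{\rho}\sigma$ is primitive modulo $p$ and $\rho\cdot\sigma\not\equiv 0\pmod{p}$. Applying Corollary~8 of \cite{pall1940arithmetic} to the common right-divisor $\bar{\rho}\sigma$ of $\bar{\rho}\gamma\sigma$ and $\bar{\rho}\sigma$ yields $k_\gamma\in\ZZ$ such that $\bar{\rho}\gamma\sigma\equiv k_\gamma\,\bar{\rho}\sigma\pmod{p}$; adding the conjugate congruence and using $\Re(rs)=\Re(sr)$ gives $\gamma\cdot\rho\bar{\sigma}\equiv k_\gamma\,(\rho\cdot\sigma)\pmod{p}$, which determines $k_\gamma$ and produces \eqref{eq:Riesz1}. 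This reads off $\varepsilon\equiv(\rho\cdot\sigma)^{-1}\rho\bar{\sigma}\pmod{p}$ and $\delta=\bar{\rho}\sigma$; here $\delta\not\equiv 0\pmod{p}$ is exactly the primitivity hypothesis.

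Next I would funnel the remaining configurations into the shape $\sigma=\rho$. If $\bar{\rho}\sigma$ fails to be primitive mod $p$ then, since $\Norm(\bar{\rho}\sigma)=p^{2}$, one has $\bar{\rho}\sigma\equiv 0\pmod{p}$, so $\sigma=u\rho$ for some $u\in\ll^{*}$; and the residual subcase where $\bar{\rho}\sigma$ is primitive mod $p$ but $\rho\cdot\sigma\equiv 0\pmod{p}$ forces $\gamma\cdot\rho\bar{\sigma}\equiv 0\pmod{p}$ for every $\gamma$, hence $\rho\bar{\sigma}\equiv 0\pmod{p}$, and again $\sigma=u\rho$. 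Substituting $\gamma u$ for $\gamma$ reduces the problem to $\sigma=\rho$, where one expands $\bar{\rho}\gamma\rho$ directly in coordinates with $\rho=a+bi+cj+dk$ and reads off the advertised $\delta=(a^{2}+b^{2})i+(bc-ad)j+(ac+bd)k$ (together with the corresponding $\varepsilon$), or, in the degenerate subcase $c=d=0$, the alternative $\delta=2aj-2bk$, $\varepsilon=aj+bk$.

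The only part of the argument that genuinely needs care is checking $\delta\not\equiv 0\pmod{p}$ in this second branch: for $\rho=a+bi$ with $b\neq 0$ and $a^{2}+b^{2}=p$, the coordinates $2a$ and $2b$ cannot both be divisible by the odd prime $p$; and for $\rho$ with $c^{2}+d^{2}\neq 0$ one must rule out the simultaneous congruences $a^{2}+b^{2}\equiv bc-ad\equiv ac+bd\equiv 0\pmod{p}$, which one dismisses via the identity $(bc-ad)^{2}+(ac+bd)^{2}=(a^{2}+b^{2})(c^{2}+d^{2})$ together with $\Norm(\rho)=p$ and a short case-split on whether $a^{2}+b^{2}$ vanishes outright or meets $p$. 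These are the only genuine computations the proof requires; the bulk of the write-up is just organizing the three cases into a single clean statement.
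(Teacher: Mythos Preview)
Your argument is essentially a clean write-up of the case analysis that precedes the proposition in the paper, using Pall's corollary and the same explicit coordinate formulas for $\delta$; so the approach is identical. One small caveat on the extra verification you add: the subcase $a=b=0$ (with $c^{2}+d^{2}=p$) is not actually ``dismissed'' by your case-split, since then the advertised $\delta=(a^{2}+b^{2})i+(bc-ad)j+(ac+bd)k$ is literally zero --- one must pick a different $\delta$ there (for instance $\delta=(c^{2}-d^{2})j+2cd\,k$, or first pass to a unit multiple of $\rho$ to make $a,b$ nontrivial), a wrinkle the paper itself glosses over.
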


\textbf{From now on}, we assume that $(x_0,y_0,z_0,t_0)\in\ZZ^4$ is a
solution of the system \eqref{eq:1334}, we set
$\gamma_0=x_0-y_0i-z_0j-t_0k$, and we are going to show that from this
solution one can construct a solution for the \eqref{eq:135} system,
by using Remark~\ref{Remark} and Proposition~\ref{prop:criterion}.
%%%%%%%%%%%%%%%%%%%%%%%%%%%%%%%%%%%%%%%%%%%%%%%%%%%%%%%
\section{Using primes in $\ll$ with norm 3}

Let $\rho = 1+i-j.$ One can easily check that
\begin{equation}
  \label{P3-0}
  \beta\rho = \sigma\alpha',
\end{equation}
where $\sigma=1+i+j$, $\alpha'=5+3i+j\in\mathcal{R}(\alpha)$, and that
\begin{equation*}
  \rho^{-1} \gamma_0 \, \sigma\in\ll \iff x_0-z_0-t_0\equiv 0 \pmod{3}.
\end{equation*}
Thus, since $(x_0,y_0,z_0,t_0)\in\ZZ^4$ is a solution of
\eqref{eq:1334}, it follows by Remark~\ref{Remark} that when this
congruence holds, $\rho^{-1} \gamma_0 \, \sigma$ yields a
solution of \eqref{eq:135}.

Now, there are $4 $ right non-associated primes above $3$, and for the
ones other than $\rho_1=1+i-j$, multiplying by $\beta$ on the left
yields:
\begin{eqnarray}
  \label{P3-1}\beta(1-i-j)&=&(1+j+k)(3-5j+k)\\
  \label{P3-2}\beta(1+i+j)&=&(1-j+k)(-3+4i+j+3k)\\
  \label{P3-3}\beta(1-i+j)&=&(1+i+k)(3-i+4j+3k).
\end{eqnarray}

Using \eqref{P3-1} instead of \eqref{P3-0}, and repeating the same
argument, we get that, if $x_0-y_0-t_0\equiv 0 \pmod{3}$, then the
system \eqref{eq:135} has an integer solution; using \eqref{P3-2} for
$x_0+y_0-t_0\equiv 0 \pmod{3}$, one obtains yet another integer
solution for the system \eqref{eq:1334}; and using \eqref{P3-3} for
$x_0+z_0-t_0\equiv 0 \pmod{3}$, one gets again another integer
solution for the system \eqref{eq:1334}. In the last two cases we
obtain no direct information for the solvability of the system
\eqref{eq:135}, but the extra solutions we get, using \eqref{P3-2} and
\eqref{P3-3}, for the system \eqref{eq:1334} are going to prove
instrumental for our proof.  Later on we will need to write these
extra solutions in terms of $x_0,y_0,z_0,t_0$. For now, we note that
the above discussion has proved the following.
\begin{Prop}\label{mod3Conditions}
  Let $m,n\in\NN$ be such that $35m-n^4$ is non-negative and not of the
  form $4^r(8s+7)$, for any $r,s\in\NN$. For a solution
  $(x_0,y_0,z_0,t_0)\in\ZZ^4$ of the system \eqref{eq:1334}, if either
  of the following holds:
  \begin{itemize}
  \item[i)] $x_0-y_0-t_0\equiv 0\pmod 3$, or
  \item[ii)]$x_0-z_0-t_0\equiv 0\pmod 3$,
  \end{itemize}
  then the system \eqref{eq:135} has an integer solution.
\end{Prop}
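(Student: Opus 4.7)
The strategy is dictated by Remark~\ref{Remark} and Proposition~\ref{prop:criterion}: for each of the two hypotheses I must exhibit $\rho,\sigma\in\ll$ of norm $3$ with $\sigma^{-1}\beta\rho\in\mathcal{R}(\alpha)$, and then verify that the stated divisibility condition on $x_0,y_0,z_0,t_0$ is precisely the criterion $\gamma_0\cdot\varepsilon\equiv 0\pmod 3$ supplied by that proposition. Once that is done, $\rho^{-1}\gamma_0\sigma$ lies in $\ll$, has norm $m$, and the identity $\rho^{-1}(\gamma_0\beta)\rho=\rho^{-1}\gamma_0\sigma\cdot\sigma^{-1}\beta\rho$ forces its pairing with $\sigma^{-1}\beta\rho\in\mathcal{R}(\alpha)$ to equal $n^2$. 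Thus reading off the coordinates of $\rho^{-1}\gamma_0\sigma$ yields a solution of \eqref{eq:135}. The two candidate factorizations are already in the excerpt: \eqref{P3-0} will handle case~(ii) and \eqref{P3-1} will handle case~(i).

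For case~(ii) I take $\rho=1+i-j$ and $\sigma=1+i+j$. A short direct calculation gives $\rho\cdot\sigma=1$, invertible mod $3$, placing us in the regime of the Riesz-type formula~\eqref{eq:Riesz1}. The criterion then reduces to $\gamma_0\cdot\rho\bar\sigma\equiv 0\pmod 3$; computing $\rho\bar\sigma=1-2j-2k$ and pairing with $\gamma_0=x_0-y_0i-z_0j-t_0k$ yields $x_0+2z_0+2t_0\equiv 0\pmod 3$, equivalent to the hypothesis $x_0-z_0-t_0\equiv 0\pmod 3$.

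Case~(i) is the delicate one and I expect it to be the main obstacle. I take $\rho=1-i-j$ and $\sigma=1+j+k$ from \eqref{P3-1}. Here $\rho\bar\sigma=-3j$, so $\rho\cdot\sigma\equiv 0\pmod 3$ and in fact $\sigma=j\rho$, which lands us in the exceptional branch of Proposition~\ref{prop:criterion} where $\sigma=u\rho$ with $u=j$. Following the recipe indicated there, I would apply the ``same-$\rho$'' analysis to $\tilde\gamma=\gamma_0 j$: the formula in the excerpt gives $\delta=(a^2+b^2)i+(bc-ad)j+(ac+bd)k=2i+j-k$ for this $\rho$, and a direct computation of $\bar\rho\tilde\gamma\rho$ modulo $3$, for a generic $\tilde\gamma=p+qi+rj+sk$, should produce an expression proportional over $\FF_3$ to $\delta$ whose coefficient is a linear form in $p,q,r,s$ that identifies $\varepsilon$ (up to scale). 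Specialising $\tilde\gamma=\gamma_0 j=z_0+t_0 i+x_0 j-y_0 k$ and simplifying then gives the criterion $x_0-y_0-t_0\equiv 0\pmod 3$, which is exactly condition~(i). With both criteria verified, the proposition follows at once from Remark~\ref{Remark}.
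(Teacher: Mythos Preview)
Your proof is correct and follows essentially the same approach as the paper: you use the factorizations \eqref{P3-0} and \eqref{P3-1} together with Remark~\ref{Remark} and Proposition~\ref{prop:criterion}, exactly as intended. The paper glosses over the verification of the criterion in case~(i) with ``repeating the same argument,'' whereas you correctly observe that this instance lands in the exceptional branch $\sigma=u\rho$ (here $\sigma=j\rho$) of Proposition~\ref{prop:criterion} and work out the resulting linear form explicitly; this is a genuine detail the paper suppresses, and your handling of it is accurate.
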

%%%%%%%%%%%%%%%%%%%%%%%%%%%%%%%%%%%%%%%%%%%%%%%%%%%%%%%
\section{Using primes in $\ll$ with norm 5}

Much like as we did in the previous section, where we used the primes
above $3$ to see that a solution $(x_0,y_0,z_0,t_0)\in\ZZ^4$ for the
system \eqref{eq:1334} either yields conditions for the solvability of
the system \eqref{eq:135}, or another solution of the system
\eqref{eq:1334}, here we will use the primes above $5$ to do something
analogous, and we will actually calculate the new solutions for the
system \eqref{eq:1334}, since we will need to use those explicit
expressions.

Taking representatives of all the six primes of norm $5$, up to right
associates, and multiplying by $\beta$ on the left, we get
\begin{eqnarray*}
  \beta(1+2i)&=&(j-2k)(3-4i+3j-k)\\
  \beta(1+2j)&=&(2+i)(-3-i+4j+3k)\\
  \beta(1-2k)&=&(1-2i)(3+3i+j+4k)\\
  \beta(1-2i)&=&(2+i)(3-i+5k)\\
  \beta(1-2j)&=&(-1+2i)(3-5i-j)\\
  \beta(1+2k)&=&(j-2k)(-3+5j-k).
\end{eqnarray*}
For $\delta=\gamma_0\beta$, we then see that
\begin{equation}
  \label{eq:primesover5}
  \begin{split}
    (1+2i)^{-1}\delta (1+2i)
    &=[(1+2i)^{-1}\,\gamma_0\,(j-2k)](3-4i+3j-k)\\
    (1+2j)^{-1}\delta (1+2j)
    &=[(1+2j)^{-1}\,\gamma_0\,(2+i)](-3-i+4j+3k)\\
    (1-2k)^{-1}\delta (1-2k)
    &=[(1-2k)^{-1}\,\gamma_0\,(1-2i)](3+3i+j+4k)\\
    (1-2i)^{-1}\delta (1-2i)
    &=[(1-2i)^{-1}\,\gamma_0\,(2+i)](3-i+5k)\\
    (1-2j)^{-1}\delta (1-2j)
    &=[(1-2j)^{-1}\,\gamma_0\,(-1+2i)](3-5i-j)\\
    (1+2k)^{-1}\delta (1+2k)
    &=[(1+2k)^{-1}\,\gamma_0\,(j-2k)](-3+5j-k).
  \end{split}
\end{equation}
Denoting the expressions in the brackets by $\gamma_i$,
$i=1,\dots, 6$, respectively,
%
% one has the following explicit expressions
% %
% \begin{small}
%   \begin{equation*}
%   \begin{array}{rcl}
%     \gamma_1&=&z_0+t_0i-\frac{3x_0+4y_0}{5}j-\frac{4x_0-3y_0}{5}k\\
%     \gamma_2&=&\frac{2x_0 + y_0 - 4z_0 - 2t_0}{5}  + \frac{x_0 -
%                 2y_0 - 2z_0 + 4t_0}{5}i-\frac{4x_0 + 2y_0 + 2z_0 +
%                 t_0}{5}j+\frac{2x_0 - 4y_0 + z_0 - 2t_0}{5}k\\ 
%     \gamma_3&=&\frac{x_0 - 2y_0 + 4z_0 + 2t_0}{5} - \frac{2x_0 +
%                 y_0 - 2z_0 + 4t_0}{5}i-\frac{4x_0 + 2y_0 + z_0 -
%                 2t_0}{5}j+\frac{2x_0 - 4y_0 - 2z_0 - t_0}{5}k \\ 
%     \gamma_4&=&y_0+x_0i-\frac{4z_0-3t_0}{5}j-\frac{3z_0+4t_0}{5}k\\
%     \gamma_5&=&\frac{-x_0 + 2y_0 - 2z_0 + 4t_0}{5}  + \frac{2x_0 +
%                 y_0 + 4z_0 + 2t_0}{5}i-\frac{2x_0 - 4y_0 - z_0 +
%                 2t_0}{5}j-\frac{4x_0 + 2y_0 - 2z_0 - t_0}{5}k\\ 
%     \gamma_6&=&\frac{-4x_0 - 2y_0 + z_0 - 2t_0}{5}  + \frac{2x_0 -
%                 4y_0 + 2z_0 + t_0}{5}i+\frac{x_0 - 2y_0 - 4z_0 -
%                 2t_0}{5}j-\frac{2x_0 + y_0 + 2z_0 - 4t_0}{5}k. 
%   \end{array}
% \end{equation*}
% \end{small}%
%
one sees that if any of $\gamma_4,\gamma_5,\gamma_6$ is in $\ll$, then
the system \eqref{eq:135} would have integer solutions by
Remark~\ref{Remark}, and we are done. One has, using \eqref{eq:ABC2}
and Proposition~\ref{prop:criterion},
\begin{equation*}
    \begin{array}{l}
      \gamma_4\in\ll \iff t_0\equiv 3z_0\pmod 5\iff n^2\equiv 2A\pmod
      5\\  
      \gamma_5\in\ll \iff x_0 - 2y_0 + 2z_0 + t_0\equiv 0\pmod 5
      \iff A\equiv 0\pmod 5\\ 
      \gamma_6\in\ll \iff x_0 - 2y_0 + z_0 - 2t_0\equiv 0\pmod5
      \iff n^2\equiv-A\pmod 5. 
    \end{array}
\end{equation*}
Therefore, we just proved the following.
\begin{Prop}\label{mod5Conditions}
  Let $m,n\in\NN$ be such that $35m-n^4$ is non-negative and not of
  the form $4^r(8s+7)$, for any $r,s\in\NN$. If
  $(x_0,y_0,z_0,t_0)\in\ZZ^4$ is a solution of the system
  \eqref{eq:1334}, and $A = 3 x_0 - y_0 - 4 z_0 + 3 t_0$ satisfies any
  one of the following congruences:
  \begin{itemize}
  \item[(i)] $A\equiv 0\pmod 5, $
  \item[(ii)]$n^2\equiv 2A\pmod 5,$
  \item[(iii)]$n^2\equiv -A\pmod 5,$
  \end{itemize}
  then the system \eqref{eq:135} has an integer solution.
\end{Prop}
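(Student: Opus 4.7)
The plan is to apply Remark~\ref{Remark} and Proposition~\ref{prop:criterion} to those identities in \eqref{eq:primesover5} whose right-hand factor $\alpha'$ lies in $\mathcal{R}(\alpha)$. Inspection shows these are exactly the last three, corresponding to the pairs
$$(\rho,\sigma) \in \{(1-2i,\,2+i),\ (1-2j,\,-1+2i),\ (1+2k,\,j-2k)\},$$
with $\alpha'$ equal to $3-i+5k$, $3-5i-j$, and $-3+5j-k$ respectively; all three have coordinate multiset $\{0,\pm 1,\pm 3,\pm 5\}$ and hence belong to $\mathcal{R}(\alpha)$.

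For each such pair, Remark~\ref{Remark} tells us that if the quaternion $\gamma_i := \rho^{-1}\gamma_0\sigma$ happens to be a Lipschitz integer, then $\rho^{-1}\delta\rho = \gamma_i\,\alpha'$ still has real part $n^2$ and is exhibited as an element of $\ll$ of norm $m$ times an element of $\mathcal{R}(\alpha)$; reading off its coordinates (and changing signs as necessary) yields an integer solution of \eqref{eq:135}. Hence the task reduces to translating each condition $\gamma_i\in\ll$, equivalently $\bar{\rho}\gamma_0\sigma\equiv 0\pmod 5$, into the congruence on $A$ and $n^2$ stated in the proposition.

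Proposition~\ref{prop:criterion} supplies exactly this translation. For the pairs $(1-2j,-1+2i)$ and $(1+2k,j-2k)$ one checks that $\bar{\rho}\sigma$ is primitive modulo $5$ and that $\rho\cdot\sigma\not\equiv 0\pmod 5$, so the main branch of the proposition produces an $\varepsilon\in\ll$ for which $\gamma_i\in\ll$ iff $\gamma_0\cdot\varepsilon\equiv 0\pmod 5$. For $(1-2i,\,2+i)$ one instead has $\bar\rho\sigma = 5i$ because $\sigma = i\rho$; this falls into the degenerate branch of the proposition, in which one computes $\varepsilon$ for $\bar\rho\gamma\rho$ and replaces $\gamma_0$ by $\gamma_0 i$. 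In each of the three cases this produces an explicit linear congruence on $x_0,y_0,z_0,t_0$ modulo $5$ which, after substituting the expressions for $A$ and $n^2$ from \eqref{eq:ABC2}, simplifies respectively to $n^2\equiv 2A$, $A\equiv 0$, and $n^2\equiv -A$ modulo $5$, matching conditions (ii), (i), and (iii) of the statement.

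The main (and essentially only) obstacle is the careful bookkeeping of the three quaternion products; no new conceptual ingredient is needed beyond the machinery already set up. As a sanity check one can bypass Proposition~\ref{prop:criterion} altogether by directly expanding $(1+2i)\gamma_0(2+i)$, $(1+2j)\gamma_0(-1+2i)$, and $(1-2k)\gamma_0(j-2k)$ and reading off the divisibility-by-$5$ conditions from their coefficients.
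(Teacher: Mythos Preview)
Your proposal is correct and follows essentially the same route as the paper: you single out the last three identities in \eqref{eq:primesover5}, whose right factors lie in $\mathcal{R}(\alpha)$, and then use Remark~\ref{Remark} together with Proposition~\ref{prop:criterion} (and \eqref{eq:ABC2}) to convert each condition $\gamma_i\in\ll$ into the stated congruence on $A$ and $n^2$. Your explicit identification of which branch of Proposition~\ref{prop:criterion} applies to each pair $(\rho,\sigma)$---in particular noticing that $(1-2i,2+i)$ falls into the degenerate case since $\sigma=i\rho$---is a nice bit of extra care that the paper leaves implicit.
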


We notice that if $(x_0,y_0,z_0,t_0)$ is a solution of the system
\eqref{eq:1334}, then $(x_0,z_0,y_0,t_0)$ is a solution of it as well.
Therefore we also have:
\begin{Cor}\label{mod5ConditionsCor}
  Let $m,n\in\NN$ be such that $35m-n^4$ is non-negative and not of the
  form $4^r(8s+7)$, for any $r,s\in\NN$. If $(x_0,y_0,z_0,t_0)\in\ZZ^4$
  is a solution of the system \eqref{eq:1334} such that any of the
  following congruences hold:
  \begin{itemize}
  \item[(i)] $t_0\equiv 3y_0\pmod 5$,
  \item[(ii)]$x_0 + 2y_0 - 2z_0 + t_0\equiv 0\pmod 5$,
  \item[(iii)]$x_0 +y_0 - 2z_0 - 2t_0\equiv 0\pmod 5$,
  \end{itemize}
  then the system \eqref{eq:135} has an integer solution.
\end{Cor}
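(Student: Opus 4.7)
The plan is to deduce the corollary immediately from Proposition~\ref{mod5Conditions} by exploiting a symmetry of the auxiliary system~\eqref{eq:1334}. The key observation, already highlighted in the sentence preceding the corollary, is that in~\eqref{eq:1334} the coefficients of $y$ and $z$ in the linear equation coincide (both equal $3$), while the quadratic form $x^2+y^2+z^2+t^2$ is symmetric in all four variables; hence the transposition $y\leftrightarrow z$ sends solutions to solutions, so that $(x_0,z_0,y_0,t_0)\in\ZZ^4$ solves~\eqref{eq:1334} whenever $(x_0,y_0,z_0,t_0)$ does.

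Next, I would apply Proposition~\ref{mod5Conditions} to this swapped tuple. The three sufficient congruences of that proposition, when expressed directly in terms of the coordinates via the criteria $\gamma_4,\gamma_5,\gamma_6\in\ll$ that were derived from~\eqref{eq:primesover5} and Proposition~\ref{prop:criterion}, are $t_0\equiv 3z_0\pmod 5$, $x_0-2y_0+2z_0+t_0\equiv 0\pmod 5$, and $x_0-2y_0+z_0-2t_0\equiv 0\pmod 5$. Interchanging $y_0$ and $z_0$ in each of these transforms them verbatim into conditions (i), (ii), (iii) of the corollary; therefore, whenever one of these holds, Remark~\ref{Remark} applied to the swapped solution $(x_0,z_0,y_0,t_0)$ produces an integer solution of~\eqref{eq:135}.

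The hard part is essentially non-existent: the proof of Proposition~\ref{mod5Conditions} uses no feature specific to the positions of $y_0$ and $z_0$ beyond their being the middle coordinates of a $\ZZ^4$-solution of~\eqref{eq:1334}, and the $y\leftrightarrow z$ symmetry of~\eqref{eq:1334} is immediate by inspection of the system. The only verification needed is the routine check that the three substituted congruences coincide, character for character, with those listed in the statement of the corollary.
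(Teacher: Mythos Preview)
Your proposal is correct and follows exactly the paper's own argument: the paper's entire proof is the one-sentence observation preceding the corollary that swapping $y_0$ and $z_0$ sends solutions of \eqref{eq:1334} to solutions, after which Proposition~\ref{mod5Conditions} (in its coordinate form) applies to $(x_0,z_0,y_0,t_0)$ and yields precisely conditions (i)--(iii).
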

Let us look at $\gamma_1,\gamma_2,\gamma_3$ now. Using once more
\eqref{eq:ABC2} and Proposition~\ref{prop:criterion}, one gets:
\begin{equation*}
  \begin{array}{l}
    \gamma_1\in\ll \iff y_0\equiv 3x_0\pmod 5 \iff
    n^2\equiv -2A \pmod 5\\ 
    \gamma_2\in\ll \iff x_0 - 2y_0 - 2z_0 -t_0 \equiv 0\pmod 5
    \iff n^2\equiv 0\pmod 5\\ 
    \gamma_3\in\ll \iff x_0 - 2y_0 - z_0 + 2t_0\equiv 0\pmod 5
    \iff n^2\equiv A\pmod 5. 
  \end{array}
\end{equation*}
Note that for $n^2\not\equiv 0\pmod 5$, either
$n^2\equiv\pm A\pmod 5$, $n^2\equiv\pm 2A\pmod 5$ or
$A\equiv 0\pmod 5$.  We have seen what happens if
$n^2\equiv- A\pmod 5$, $n^2\equiv 2A\pmod 5$ and $A\equiv 0\pmod 5$,
hence we just need to see what happens on the other two remaining
cases:
\begin{itemize}
\item If $n^2\equiv A\pmod 5$, then
  $x_0 - 2y_0 - z_0 + 2t_0\equiv 0\pmod 5$, so $\gamma_3\in\ll$.
  Since
  \begin{center}
    \small
    $\gamma_3=\frac{x_0 - 2y_0 + 4z_0 + 2t_0}{5} - \frac{2x_0 + y_0 -
      2z_0 + 4t_0}{5}i-\frac{4x_0 + 2y_0 + z_0 - 2t_0}{5}j+\frac{2x_0
      - 4y_0 - 2z_0 - t_0}{5}k$,
  \end{center}
  and, according to \eqref{eq:primesover5}, $\gamma_3$ yields the
  element $\beta^* = 3+3i+j+4k\in\mathcal{R}(\beta)$, it follows that
  \begin{center}
    \small
    $\gamma^*=\frac{4x_0 + 2y_0 + z_0 - 2t_0}{5} - \frac{2x_0 + y_0 -
      2z_0 + 4t_0}{5}i - \frac{x_0 - 2y_0 + 4z_0 + 2t_0}{5}j
    +\frac{2x_0 - 4y_0 - 2z_0 - t_0}{5}k$
  \end{center}
  satisfies
  $\Re(\gamma^*\beta)=\Re(\gamma_3\beta^*)=\Re(\gamma_0\beta)$, and
  thus $\gamma^*$ yields a solution of \eqref{eq:1334}.

  If we denote the coordinates of the conjugate of $\gamma^*$ by $x_1,y_1,z_1,t_1$,
  using the fact that $ x_0 - 2y_0 - z_0 + 2t_0 = 5\kappa,$ for some
  $\kappa\in\ZZ$, we have
  \begin{equation}\label{n^2=Amod5}
    \left\{
      \begin{array}{rcl}
        x_1&=&x_0-\kappa\\
        y_1&=&y_0+2\kappa\\
        z_1&=&z_0+\kappa\\
        t_1&=&t_0-2\kappa.
      \end{array}
    \right.
  \end{equation}
\item If $n^2\equiv -2A\pmod 5$, then $y_0\equiv 3x_0\pmod 5$, and so
  $\gamma_1\in\ll$. One then sees, as above, that
  $$\Re\left[\left(\frac{-4x_0+3y_0}{5}- \frac{3x_0+4y_0}{5}i-z_0j-
      t_0k\right)\beta\right] = \Re(\gamma\beta),$$
  and therefore
  $$\left(x_2,y_2,z_2,t_2\right) =
  \left(\frac{-4x_0+3y_0}{5},\frac{3x_0+4y_0}{5},z_0,t_0\right)$$ is
  another integer solution of the system \eqref{eq:1334} obtained from
  the solution $(x_0,y_0,z_0,t_0)\in\ZZ^4$.  We have
  $y_0=3x_0+5\lambda$, for some $\lambda\in\ZZ$, and thus
  \begin{equation}\label{n^2=-2Amod5}
    \left\{
      \begin{array}{rcl}
        x_2&=&x_0+3\lambda\\
        y_2&=&y_0-\lambda\\
        z_2&=&z_0\\
        t_2&=&t_0.
      \end{array}
    \right.
  \end{equation}
  is another integer solution of \eqref{eq:1334}.
\end{itemize}
Now we are ready to prove the following:
\begin{Prop}\label{135int 1+2}
  Let $m,n\in\NN$ be such that $35m-n^4$ is non-negative and not of the
  form $4^r(8s+7)$, for any $r,s\in\NN$. The following holds:
  \begin{itemize}
  \item[i)]If $m\equiv 0\pmod 3$, then the system \eqref{eq:135} has
    integer solutions for all $n\in\NN$ with $(n,15)=1$.
  \item[ii)] If $m\equiv 1 \pmod 3$, then the system \eqref{eq:135}
    has integer solutions for all $n\equiv 0\pmod 3$ with
    $5\nmid n$.
  \end{itemize}
\end{Prop}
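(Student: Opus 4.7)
The plan is to begin with an integer solution $(x_0,y_0,z_0,t_0)$ of \eqref{eq:1334}, whose existence is guaranteed by Proposition~\ref{1334or135} (unless \eqref{eq:135} already has a solution, in which case there is nothing to prove), and then to produce a solution of \eqref{eq:135} by combining Proposition~\ref{mod3Conditions}, Proposition~\ref{mod5Conditions}, Corollary~\ref{mod5ConditionsCor} and, when necessary, the two transition moves \eqref{n^2=Amod5} and \eqref{n^2=-2Amod5} obtained via primes above $5$.

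Part (ii) should be handled by the mod $3$ information alone. From $m\equiv 1\pmod 3$ one gets $x_0^2+y_0^2+z_0^2+t_0^2\equiv 1\pmod 3$, so exactly one coordinate is nonzero mod $3$; from $n\equiv 0\pmod 3$ one gets $x_0+3y_0+3z_0+4t_0\equiv 0\pmod 3$, i.e. $x_0+t_0\equiv 0\pmod 3$. A short check rules out the possibility that this nonzero coordinate is $x_0$ or $t_0$, leaving only the two patterns in which the only nonzero coordinate (mod $3$) is $y_0$ or $z_0$. In these patterns, condition (b) or condition (a) of Proposition~\ref{mod3Conditions} holds automatically, so \eqref{eq:135} has an integer solution.

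For part (i), an analogous but slightly longer mod $3$ analysis (using $x_0+t_0\equiv 1\pmod 3$ and the sum of squares being $\equiv 0\pmod 3$) leaves exactly four residue patterns for $(x_0,y_0,z_0,t_0)\pmod 3$. In two of them, one of conditions (a), (b) in Proposition~\ref{mod3Conditions} already holds; the two ``bad'' patterns are
\[
(x_0,y_0,z_0,t_0)\equiv (0,1,1,1)\pmod 3 \quad\text{and}\quad (x_0,y_0,z_0,t_0)\equiv(1,2,2,0)\pmod 3.
\]
For a bad pattern, we first examine Proposition~\ref{mod5Conditions} and Corollary~\ref{mod5ConditionsCor}: if the value of $A=3x_0-y_0-4z_0+3t_0$ (or its swapped analogue) satisfies one of their mod $5$ congruences, \eqref{eq:135} is solvable directly. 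If none of these congruences hold, then since $5\nmid n$ only the residues $A\equiv n^2$ or $A\equiv 2n^2\pmod 5$ remain, corresponding to applying the transition \eqref{n^2=Amod5} or \eqref{n^2=-2Amod5} respectively.

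Applying the appropriate transition gives a new integer solution $(x_1,y_1,z_1,t_1)$ of \eqref{eq:1334}, with $\kappa$ or $\lambda$ whose residue mod $3$ is forced by the values of $(x_0,y_0,z_0,t_0)\pmod 3$. A direct computation in each of the four sub-cases (two bad patterns times two possible moves) shows that the new coordinates satisfy $x_1-y_1-t_1\equiv 0\pmod 3$, so that Proposition~\ref{mod3Conditions} applied to the new solution yields an integer solution of \eqref{eq:135}. The main technical point is this last bookkeeping step: one has to track simultaneously which of $\mu_1,\mu_2$ is allowed (determined by $A\pmod 5$) and how $\kappa$ or $\lambda$ reduces mod $3$ in each pattern, but all the arithmetic is routine.
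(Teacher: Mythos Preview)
There is a genuine gap in your treatment of part (ii). You assert that from $x_0^2+y_0^2+z_0^2+t_0^2\equiv 1\pmod 3$ ``exactly one coordinate is nonzero mod $3$''. This is false: since squares are $0$ or $1$ mod $3$, the number of nonzero coordinates is congruent to $1\pmod 3$, so it can be either one \emph{or} four. When all four coordinates are $\pm 1\pmod 3$, the constraint $x_0+t_0\equiv 0\pmod 3$ still allows the pattern $(x_0,y_0,z_0,t_0)\equiv(1,1,1,2)\pmod 3$ (and its negative), for which
\[
x_0-y_0-t_0\equiv 1,\qquad x_0-z_0-t_0\equiv 1\pmod 3,
\]
so neither condition of Proposition~\ref{mod3Conditions} holds. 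Thus part (ii) is \emph{not} handled by the mod $3$ information alone; the bad pattern $(1,1,1,2)$ requires exactly the same mod $5$ transitions you invoke for part (i). Your enumeration in part (i) is also off (there are twelve residue patterns, six up to overall sign, not four), though you do correctly identify the two bad ones up to sign.

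The paper avoids this pitfall by treating (i) and (ii) simultaneously: in both cases $35m-n^4\equiv -1\pmod 3$, so $A^2+B^2+C^2\equiv 2\pmod 3$, forcing exactly one of $A,B,C$ to vanish mod $3$. A case analysis on $(A,B,C)\pmod 3$ via \eqref{eq:ABC2} shows that five of the six sign patterns yield one of the two congruences in Proposition~\ref{mod3Conditions}; the sole exception is $B\equiv 0$, $A\equiv -C\pmod 3$, which corresponds precisely to \emph{all} your bad patterns, including the missed one $(1,1,1,2)$. For that single case the paper applies the mod $5$ transitions and checks, by a computation independent of the particular residue pattern, that the new solution satisfies $x_1+2y_1+2t_1\equiv 2(x_0+z_0+2t_0)\equiv 0\pmod 3$. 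Your coordinate-by-coordinate bookkeeping would also succeed once the missing pattern is included, but the $A,B,C$ formulation is both shorter and less error-prone.
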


\begin{proof}
  As above, we may assume the existence of a solution
  $(x_0,y_0,z_0,t_0)\in\ZZ^4$ of the system \eqref{eq:1334}.  Note
  that if $m\equiv 0\pmod 3$ and $n^2\equiv 1 \pmod 3$, or if
  $m\equiv 1\pmod 3$ and $n^2\equiv 0 \pmod 3$, then
  $35m-n^4\equiv -1\pmod 3$. Therefore $A^2+B^2+C^2\equiv -1\pmod 3$,
  and since the squares modulo $3$ are $0$ and $1$, we have that
  exactly one of the $A,B,C$ is $0$ modulo $3$, and the other two are
  $\pm1$ modulo $3$.  From \eqref{eq:ABC2} and for a solution
  $(x_0,y_0,z_0,t_0)\in\ZZ^4$ of \eqref{eq:1334}, we see that
  \begin{equation*}
    \left\{
    \begin{array}{rcl}
      n^2 &\equiv& x_0 +  t_0 \pmod 3\\
      A   &\equiv&  - y_0 - z_0 \pmod 3\\
      B   &\equiv&  y_0 - z_0 \pmod 3 \\
      C   &\equiv&  x_0 - t_0 \pmod 3.
    \end{array}
  \right.
  \end{equation*}
  We now consider all possibilities for the congruence classes of
  $A,B,C$ modulo 3. In each one of the following cases, one sees that
  one can use Proposition~\ref{mod3Conditions} to show that the system
  \eqref{eq:135} has integer solutions:
  \begin{itemize}
  \item If $A\equiv0\pmod 3$ and $B\equiv C\pmod 3$, then it is easy
    to see that $x_0+2z_0+2t_0\equiv0\pmod 3$.
  \item If $A\equiv0\pmod 3$ and $B\equiv -C\pmod 3$, then
    $x_0+2y_0+2t_0\equiv0\pmod 3$.
  \item If $C\equiv0\pmod 3$ and $A\equiv B\pmod 3$, then
    $x_0+2y_0+2t_0\equiv0\pmod 3$.
  \item If $C\equiv0\pmod 3$ and $A\equiv -B\pmod 3$, then
    $x_0+2z_0+2t_0\equiv0\pmod 3$.
  \item If $B\equiv0\pmod 3$ and $A\equiv C\pmod 3$, then
    $x_0+2y_0+2t_0\equiv0\pmod 3$.
  \end{itemize}
  There is only one remaining case:
  \begin{itemize}
  \item If $B\equiv0\pmod 3$ and $A\equiv -C\pmod 3$, then we have
    that $x_0+y_0+2t_0\equiv x_0+z_0+2t_0\equiv0\pmod 3$.
    Proposition~\ref{mod3Conditions} does not yield the claim this
    time. Instead, we are going to use the results from the previous
    section. For $n^2\not\equiv0\pmod 5$, we have the following cases:
    \begin{itemize}
    \item If we have that $A\equiv 0\pmod 5$ or $n^2\equiv 2A\pmod 5$ or
      $n^2\equiv -A\pmod 5$,  by
      Proposition~\ref{mod5Conditions}, the system \eqref{eq:135} has
      integer solutions.
    \item If $n^2\equiv A\pmod 5$ then the solution \eqref{n^2=Amod5}
      of the system \eqref{eq:1334} satisfies
      $x_1+2y_1+2t_1\equiv 2(x_0+z_0+2t_0)\equiv 0\pmod 3$. Therefore,
      Proposition~\ref{mod3Conditions} yields the claim.
    \item If $n^2\equiv -2A\pmod 5$, then the solution
      \eqref{n^2=-2Amod5} of the system \eqref{eq:1334} satisfies
      $x_2+2y_2+2t_2\equiv x_0+y_0+2t_0\equiv 0\pmod 3$. Therefore,
      Proposition~\ref{mod3Conditions} yields the claim again.
    \end{itemize}
  \end{itemize}
\end{proof}

The case $m\equiv -1 \pmod 3$ of \eqref{Thm135int} is the only one
left to be treated. For that case, similarly to the above, we can show
the following:
\begin{Prop}\label{mod3bad case}
  Let $m,n\in\NN$, $m\equiv -1\pmod 3$, $n\not\equiv 0\pmod 3$, be
  such that $35m-n^4$ is non-negative and not of the form $4^r(8s+7)$, for
  any $r,s\in\NN$. Then, either the system \eqref{eq:135} has integer
  solutions, or if $(x_0,y_0,z_0,t_0)\in\ZZ^4$ is a solution of the
  system \eqref{eq:1334}, we must have either
  $x_0+y_0+2t_0\equiv 0\pmod 3$ and $z_0\equiv 0\pmod 3$, or
  $ x_0+z_0+2t_0\equiv 0\pmod 3$ and $y_0\equiv 0\pmod 3$.
\end{Prop}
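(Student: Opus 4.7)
The plan is to mimic the case analysis used in the proof of Proposition~\ref{135int 1+2}, but now for $m\equiv -1\pmod 3$ and $n\not\equiv 0\pmod 3$. The first step is a congruence count: since $35\equiv -1\pmod 3$, $m\equiv -1\pmod 3$ and $n^2\equiv 1\pmod 3$, one gets $35m-n^4\equiv 0\pmod 3$. Writing $35m-n^4=A^2+B^2+C^2$ with $A,B,C$ given by \eqref{eq:ABC2}, and using that squares modulo $3$ are only $0$ or $1$, the only way three squares sum to $0\pmod 3$ is if all three are $0\pmod 3$. So we are no longer in a situation where the case distinctions ``exactly one of $A,B,C$ is $0$'' arise, as in the proof of Proposition~\ref{135int 1+2}; rather, there is a single ``bad case'' forced on us: $A\equiv B\equiv C\equiv 0\pmod 3$.

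Next, I translate $A\equiv B\equiv C\equiv 0\pmod 3$ back to conditions on $(x_0,y_0,z_0,t_0)$. Reducing \eqref{eq:ABC2} modulo $3$ gives $A\equiv -y_0-z_0$, $B\equiv y_0-z_0$ and $C\equiv x_0-t_0\pmod 3$. The congruences $A\equiv B\equiv 0\pmod 3$ yield $y_0+z_0\equiv y_0-z_0\equiv 0\pmod 3$, hence $y_0\equiv z_0\equiv 0\pmod 3$, while $C\equiv 0\pmod 3$ gives $x_0\equiv t_0\pmod 3$.

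From here the conclusion is immediate: with $y_0\equiv 0\pmod 3$ and $x_0\equiv t_0\pmod 3$, one has
\[
x_0+y_0+2t_0\equiv x_0+2t_0\equiv x_0-t_0\equiv 0\pmod 3,
\]
which, together with $y_0\equiv 0\pmod 3$, establishes the first of the two disjuncts in the statement. By the symmetric argument (or by the symmetry $y_0\leftrightarrow z_0$ already noted in the text before Corollary~\ref{mod5ConditionsCor}), the second disjunct holds as well, so in fact both hold. The disjunction ``either \eqref{eq:135} has integer solutions or [the stated congruences hold]'' is therefore proven.

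I do not expect a real obstacle here: there is no need to bring in the $\mathrm{mod}\,5$ machinery, as the mod $3$ analysis is already decisive. The only thing that needs some care is the sign-book-keeping in the reductions of $A,B,C$ modulo $3$ from \eqref{eq:ABC2} and the observation that the three squares summing to $0\pmod 3$ force all three to vanish modulo $3$. The proposition is really a preparatory lemma, isolating the unique mod $3$ profile that can occur in the bad case $m\equiv -1\pmod 3$; this profile will presumably be exploited (via the corresponding solutions of \eqref{eq:1334} obtained earlier from the primes above $5$, and from the as-yet-unused primes above $7$) in the section devoted to completing the proof of Theorem~\ref{Thm135int}\,iii).
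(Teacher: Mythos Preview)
Your argument contains a genuine error at its very first step. You write that ``the only way three squares sum to $0\pmod 3$ is if all three are $0\pmod 3$'', but this is false: since squares modulo $3$ are $0$ or $1$, the sum $A^2+B^2+C^2\equiv 0\pmod 3$ holds not only when all three squares are $0$, but also when all three are $1$, i.e.~when each of $A,B,C$ is $\equiv\pm 1\pmod 3$. You have only treated the subcase $A\equiv B\equiv C\equiv 0\pmod 3$ and ignored the four sign patterns arising when all of $A,B,C$ are nonzero modulo $3$.

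The paper's proof handles precisely this. Up to overall sign there are four patterns: $A\equiv B\equiv C$, $A\equiv -B\equiv -C$, $A\equiv B\equiv -C$, and $A\equiv -B\equiv C$. Translating each back via \eqref{eq:ABC2} (just as you did), two of them yield $x_0+2y_0+2t_0\equiv 0$ or $x_0+2z_0+2t_0\equiv 0\pmod 3$, and then Proposition~\ref{mod3Conditions} immediately gives a solution of \eqref{eq:135}; the other two yield exactly the ``bad'' congruences in the statement. Your proof omits both the cases where Proposition~\ref{mod3Conditions} applies and the two bad cases with $A,B,C\not\equiv 0$, in which only one of $y_0,z_0$ vanishes modulo $3$ (not both, as in the subcase you treated). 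So the gap is not cosmetic: without the missing case analysis the disjunction in the Proposition is simply unproved.
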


\begin{proof}
  Let $m\equiv -1\pmod 3$ and $n\not\equiv 0\pmod 3$, which means that
  $n^2\equiv 1 \pmod 3$, then $35m-n^4\equiv 0 \pmod 3$, so that
  $A^2+B^2+C^2\equiv 0\pmod 3$.  Therefore $A^2\equiv B^2\equiv C^2\not\equiv 2\pmod 3$, and
  \begin{itemize}
  \item If $A\equiv B\equiv C\pmod 3$, then
    $x_0+z_0+2t_0\equiv 0\pmod 3$ and $y_0\equiv 0\pmod 3$.
  \item If $A\equiv-B\equiv-C\pmod 3$, then
    $x_0+2y_0+2t_0\equiv 0\pmod 3$, and
    Proposition~\ref{mod3Conditions} applies.
  \item If $A\equiv B\equiv -C\pmod 3$, then
    $x_0+2z_0+2t_0\equiv 0\pmod 3$, and again
    Proposition~\ref{mod3Conditions} applies.
  \item If $A\equiv -B\equiv C\pmod 3$, then
    $x_0+y_0+2t_0\equiv 0\pmod 3$ and $z_0\equiv 0\pmod 3$.
  \end{itemize}
\end{proof}

In order to complete the proof of the case $m\equiv-1\pmod 3$ of
Theorem \ref{Thm135int}, we need to work modulo $7$ as well, since
the above methods are not enough to cover every possibility.
%%%%%%%%%%%%%%%%%%%%%%%%%%%%%%%%%%%%%%%%%%%%%%%%%%%%%%%
\section{Using primes in $\ll$ with norm 7}

Let $\rho_1= 1+i+j+2k$, $\rho_2= 1-i-j-2k$, $\rho_3= 1-i+j-2k$,
$\rho_4= 1+i-j+2k$, $\rho_5=1+i+j-2k$, $\rho_6= 1-i-j+2k$,
$\rho_7= 1+i-j-2k$, and , $\rho_8= 1-i+j+2k$ be representatives of all
the $8$ right non-associate primes of norm $7$. Multiplying them all
by $\beta $ on the left, as we did before for the primes of norm $3$
or $5$, we get:
\begin{equation*}
  \begin{array}{rcl}
    \beta\rho_1 &=& (1-i+2j-k)(-3 - 3i + 4j + k)\\
    \beta\rho_2 &=& (1 -i+2j-k)( 3 + i - 4j + 3k)\\
    \beta\rho_3 &=& ( 2 -i-j+k)( 4 + i + 3j + 3k)\\
    \beta\rho_4 &=& (1+i-j-2k)(1 + 3i + 3j - 4k)\\
    \beta\rho_5 &=& (-2+i+j-k)(-i - 5j - 3k)\\
    \beta\rho_6 &=& (-1 -2i -j -k)(-3  + j - 5k)\\        
    \beta\rho_7 &=& (-2 +i+j-k)(-3i -5j + k)\\
    \beta\rho_8 &=& (1 + i - j - 2k)(-3 + 5i + j).
  \end{array}
\end{equation*} 

Denoting by $\sigma_i$ the corresponding prime above $7$ that shows up
on the right side, and setting
$\hat{\gamma}_i = \rho_i^{-1} \gamma_0\sigma_i$, one has
\begin{equation}
  \label{eq:primesover7}
  \begin{split}
    \rho_1^{-1}\gamma_0\beta \rho_1 &= \hat{\gamma}_1\, (-3-3i+4j+k)\\
    \rho_2^{-1}\gamma_0\beta \rho_2 &= \hat{\gamma}_2\, (3+i-4j+3k)\\
    \rho_3^{-1}\gamma_0\beta \rho_3 &= \hat{\gamma}_3\, (4+ i+3j+3k)\\
    \rho_4^{-1}\gamma_0\beta \rho_4 &= \hat{\gamma}_4\, (1+3i+3j-4k)\\
    \rho_5^{-1}\gamma_0\beta \rho_5 &= \hat{\gamma}_5\, (-i-5j-3k)\\
    \rho_6^{-1}\gamma_0\beta \rho_6 &= \hat{\gamma}_6\, (-3 +j-5k)\\
    \rho_7^{-1}\gamma_0\beta \rho_7 &= \hat{\gamma}_7\, (-3i-5j+k)\\
    \rho_8^{-1}\gamma_0\beta \rho_8 &= \hat{\gamma}_8\, (-3+5i+j).
  \end{split}
\end{equation}

If any of the $\hat{\gamma}_i$ for $i=5,6,7,8$ is in $\ll$, then the
system \eqref{eq:135} would have integer solutions, and we are
done. Using \eqref{eq:ABC2} and Proposition~\ref{prop:criterion}, one
deduces
\begin{equation*}
  \begin{array}{l}
    \hat{\gamma}_5\in\ll \iff x_0+2y_0+z_0+t_0\equiv 0\pmod 7
    \iff A\equiv 0\pmod 7\\ 
    \hat{\gamma}_6\in\ll
    \iff x_0 - 2y_0 + 3t_0\equiv 0\pmod 7 \iff n^2\equiv A\pmod 7\\ 
    \hat{\gamma}_7\in\ll
    \iff y_0+2z_0+3t_0\equiv 0\pmod 7\iff n^2\equiv-2A\pmod 7 \\ 
    \hat{\gamma}_8\in\ll \iff x_0 + y_0 - z_0 - 2t_0\equiv 0\pmod
    7\iff n^2\equiv-4A\pmod 7 . 
  \end{array}
\end{equation*}
Therefore, we have proved the following.
\begin{Prop}\label{mod7Conditions}
  Let $m,n\in\NN$ be such that $35m-n^4$ is non-negative and not of the
  form $4^r(8s+7)$, for any $r,s\in\NN$. If
  $(x_0,y_0,z_0,t_0)\in\ZZ^4$ is a solution of the system
  \eqref{eq:1334}, and if any of the following holds:
  \begin{itemize}
  \item[(i)] $A\equiv 0\pmod 7$
  \item[(ii)]$n^2\equiv A\pmod 7$
  \item[(iii)]$n^2\equiv -2A\pmod 7$
  \item[(iii)]$n^2\equiv -4A\pmod 7$
  \end{itemize}
  then the system \eqref{eq:135} has an integer solution.
\end{Prop}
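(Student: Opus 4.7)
The strategy follows exactly the pattern of Propositions~\ref{mod3Conditions} and~\ref{mod5Conditions}. Inspecting the eight factorizations of $\beta\rho_i$ listed above, the right factors $-i-5j-3k$, $-3+j-5k$, $-3i-5j+k$ and $-3+5i+j$ corresponding to $i=5,6,7,8$ each have norm $35$ with coordinates that are a signed permutation of $(0,1,3,5)$; hence they all lie in $\mathcal{R}(\alpha)$. Consequently, by Remark~\ref{Remark}, whenever $\hat\gamma_i=\rho_i^{-1}\gamma_0\sigma_i$ actually lies in $\ll$, the identity $\rho_i^{-1}\gamma_0\beta\rho_i=\hat\gamma_i\,\alpha'$ with $\alpha'\in\mathcal{R}(\alpha)$ produces an integer solution to \eqref{eq:135}. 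So the task reduces to translating the four membership conditions $\hat\gamma_i\in\ll$ into the congruences listed in the statement.

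The bulk of the work is then a direct application of Proposition~\ref{prop:criterion} to each pair $(\rho_i,\sigma_i)$, $i=5,6,7,8$. In the primitive case, when $\rho_i\cdot\sigma_i\not\equiv 0\pmod 7$, the discussion preceding that proposition permits one to take $\varepsilon_i=\rho_i\bar\sigma_i$. For each $i$ one first checks that $\bar\rho_i\sigma_i$ is primitive modulo $7$ (since $\sigma_i$ is visibly not a unit multiple of $\rho_i$) and that $\Re(\bar\rho_i\sigma_i)\not\equiv 0\pmod 7$; then $\varepsilon_i$ is a direct quaternion product. Writing $\gamma_0=x_0-y_0i-z_0j-t_0k$, the criterion $\hat\gamma_i\in\ll\iff\gamma_0\cdot\varepsilon_i\equiv 0\pmod 7$ unpacks into a linear congruence in $(x_0,y_0,z_0,t_0)$. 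For instance, for $\rho_5=1+i+j-2k$ and $\sigma_5=-2+i+j-k$ a short computation gives $\rho_5\bar\sigma_5=2-4i-2j+5k$, whence the criterion becomes $x_0+2y_0+z_0+t_0\equiv 0\pmod 7$, the first linear congruence feeding into the proposition.

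The final step is to re-express the four linear congruences just obtained in the variables $n^2$ and $A$ modulo $7$, via the relations \eqref{eq:ABC2}: $n^2\equiv x_0+3y_0+3z_0+4t_0\pmod 7$ and $A\equiv 3x_0-y_0-4z_0+3t_0\pmod 7$. For the $\rho_5$ case, multiplying $x_0+2y_0+z_0+t_0\equiv 0\pmod 7$ by $3$ yields precisely $A\equiv 0\pmod 7$; the three remaining equivalences $\gamma_0\cdot\varepsilon_i\equiv 0\pmod 7\iff n^2\equiv cA\pmod 7$ (with $c=1,-2,-4$ respectively) are entirely analogous, amounting to solving a $4\times 4$ linear system modulo $7$.

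I do not expect a genuine obstacle: the proof is a mechanical analogue of Propositions~\ref{mod3Conditions} and~\ref{mod5Conditions}, only with eight primes above $7$ to keep track of instead of four above $3$ or six above $5$. The one place where care is required is the final translation from $(x_0,y_0,z_0,t_0)$-congruences to $(n^2,A)$-congruences modulo $7$, where an arithmetic slip is the only realistic source of error.
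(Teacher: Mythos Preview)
Your proposal is correct and follows essentially the same route as the paper: identify that the right factors appearing in $\beta\rho_i$ for $i=5,6,7,8$ lie in $\mathcal{R}(\alpha)$, apply Remark~\ref{Remark} and Proposition~\ref{prop:criterion} with $\varepsilon_i=\rho_i\bar\sigma_i$ to obtain linear congruences in $(x_0,y_0,z_0,t_0)$ modulo~$7$, and then translate these via \eqref{eq:ABC2} into conditions on $n^2$ and $A$. Your sample computation for $i=5$ checks out and matches the paper's intermediate congruence $x_0+2y_0+z_0+t_0\equiv 0\pmod 7$; the remaining three cases are, as you say, entirely analogous.
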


Now, let us look at $\hat{\gamma}_i$, for $i=1,2,3,4$. Applying once
more \eqref{eq:ABC2} and Proposition~\ref{prop:criterion}, one has:
\begin{equation*}
  \begin{array}{l}
    \hat{\gamma}_1\in\ll \iff x_0+4z_0+2t_0\equiv 0\pmod 7 \iff
    n^2\equiv 4A \pmod 7\\ 
    \hat{\gamma}_2\in\ll\iff x_0 - y_0 + 2z_0 -t_0 \equiv 0\pmod 7
    \iff n^2\equiv 2A\pmod 7\\ 
    \hat{\gamma}_3\in\ll \iff x_0 + 4y_0 - 2z_0 \equiv 0\pmod 7
    \iff n^2\equiv -A\pmod 7\\ 
    \hat{\gamma}_4\in\ll \iff x_0 + 3y_0 + 3z_0 +4t_0 \equiv 0\pmod 7
    \iff n^2\equiv 0\pmod 7. 
  \end{array}
\end{equation*}  
If any of the $\hat{\gamma}_i$ for $i=1,2,3,4$ is in $\ll$, then we
will have another solution for the system $\eqref{eq:1334}$. We do not
care for $\hat{\gamma}_4$, as the statement of Theorem~\ref{Thm135int}
suggests, and we will examine each of the cases
$\hat{\gamma}_1,\hat{\gamma}_2,\hat{\gamma}_3\in\ll$ separately. Note
that if $n\not \equiv 0 \pmod 7$, then we have either
$n^2\equiv\pm A\pmod 7$, $n^2\equiv\pm 2A\pmod 7$,
$n^2\equiv\pm 4A\pmod 7$, or $A\equiv0\pmod 7$.
\begin{itemize}
\item If $\hat{\gamma}_1\in\ll$, then $n^2\equiv 4A\pmod 7$ and
  $x_0+4z_0+2t_0\equiv 0\pmod 7$, which means that
  $x_0+4z_0+2t_0=7\mu$, for some $\mu\in\ZZ$. Looking at the
  coordinates of $\hat{\gamma}_1$, rearranging them and changing signs
  accordingly, one sees that for
  $$\gamma_1^*=\frac{6x_0 + 3z_0 - 2t_0}{7} - \frac{3x_0 - 2z_0 +
    6t_0}{7}i - y_0j + \frac{2x_0 - 6z_0 - 3t_0}{7}k$$ one has
  $\Re(\gamma_1^*\beta)=\Re(\gamma\beta)$, and hence
  $$(\hat{x}_1,\hat{y}_1,\hat{z}_1,\hat{t}_1)=\left(\frac{6x_0 + 3z_0
      - 2t_0}{7}, \frac{3x_0 - 2z_0 + 6t_0}{7} , y_0, \frac{-2x_0 +
      6z_0 + 3t_0}{7}\right)$$
  is another integer solution of the system \eqref{eq:1334}, which we
  can write as:
  \begin{equation}\label{n^2=4Amod7}
    \left\{
      \begin{array}{rcl}
        \hat{x}_1&=&x_0+z_0-\mu\\
        \hat{y}_1&=&-2z_0+3\mu\\
        \hat{z}_1&=&y_0\\
        \hat{t}_1&=&2z_0+t_0-2\mu.
      \end{array}
    \right.
  \end{equation}
\item If $\hat{\gamma}_2\in\ll$, then $n^2\equiv 2A\pmod 7$, and
  $x_0 - y_0 + 2z_0 -t_0 \equiv 0\pmod 7$,
  i.e.~$ x_0 - y_0 + 2z_0 -t_0=7\nu$, for some $\nu\in\ZZ$. As in the
  previous case, one shows that for
  \begin{small}
    $\gamma_2^*= \frac{5x_0 + 2y_0 - 4z_0 + 2t_0}{7} -
    \frac{2x_0 + 5y_0 + 4z_0 - 2t_0}{7}i + \frac{4x_0 - 4y_0 + z_0 -
      4t_0}{7}j - \frac{2x_0 - 2y_0 + 4z_0 + 5t_0}{7}k$
  \end{small}
  we have that
  $\Re\left(\gamma_2^*\beta\right)=\Re(\gamma\beta)$.  Thus, the conjugate of
  $\gamma_2^*$ provides another integer solution of the system
  \eqref{eq:1334}. Denoting its coordinates by
  $\hat{x}_2,\hat{y}_2,\hat{z}_2,\hat{t}_2$, one can see that:
  \begin{equation}\label{n^2=2Amod7}
    \left\{
      \begin{array}{rcl}
        \hat{x}_2&=&x_0-2\nu\\
        \hat{y}_2&=&y_0+2\nu\\
        \hat{z}_2&=&z_0-4\nu\\
        \hat{t}_2&=&t_0+2\nu.
      \end{array}
    \right.
  \end{equation}
\item If $\hat{\gamma}_3\in\ll$, then $n^2\equiv -A\pmod 7$, and
  $x_0 + 4y_0 - 2z_0 \equiv 0\pmod 7$,
  i.e.~$ x_0 + 4y_0 - 2z_0=7\xi $, for some $\xi\in\ZZ$. As in the
  previous cases,
  $$\gamma_3^*=\frac{-2x_0+6y_0-3z_0}{7} +
  \frac{3x_0-2y_0-6z_0}{7}i - \frac{6x_0+3y_0+2z_0}{7}j-t_0k,$$
  satisfies $\Re\left(\gamma_3^*\beta\right)=\Re(\gamma\beta)$.
  Hence, the coordinates of its conjugate,
  $\hat{x}_3,\hat{y}_3,\hat{z}_3,\hat{t}_3$, furnish another integer
  solution of the system \eqref{eq:1334}, and one has:
  \begin{equation}\label{n^2=-Amod7}
    \left\{
      \begin{array}{rcl}
        \hat{x}_3&=&2y_0-z_0-2\xi\\
        \hat{y}_3&=&2y_0-3\xi\\
        \hat{z}_3&=&-3y_0+2z_0+6\xi\\
        \hat{t}_3&=&t_0.
      \end{array}
    \right.
  \end{equation}
\end{itemize}
Now we have everything that we need to prove the following result.
\begin{Prop}\label{135int 3}
  Let $m,n\in\NN$ be such that $35m-n^4$ is non-negative not of the
  form $4^r(8s+7)$, for any $r,s\in\NN$. When $m\equiv -1\pmod 3$ the
  system \eqref{eq:135} has integer solutions for all $n\in\NN$ with $(n,105)=1$.
\end{Prop}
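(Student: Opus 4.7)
The plan parallels the proof of Proposition~\ref{135int 1+2}, but now also invokes the mod~7 analysis of Section~6. I start with a solution $(x_0, y_0, z_0, t_0) \in \ZZ^4$ of \eqref{eq:1334}, guaranteed by Theorem~\ref{thm:generic}. By Proposition~\ref{mod3bad case}, it suffices to consider the two ``bad'' mod~3 configurations, and the $y \leftrightarrow z$ symmetry of \eqref{eq:1334} lets me reduce to case~(b): $z_0 \equiv 0 \pmod{3}$ and $x_0 + y_0 + 2t_0 \equiv 0 \pmod{3}$. Under the hypothesis $(n, 105) = 1$, I may further assume that none of the direct conditions in Propositions~\ref{mod5Conditions}, \ref{mod7Conditions} or Corollary~\ref{mod5ConditionsCor} produces a solution of \eqref{eq:135}. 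Setting $A' = 3x_0 - z_0 - 4y_0 + 3t_0$ (the analogue of $A$ for the swapped solution), this confines $n^2 \pmod{5}$ to the transformation cases $n^2 \equiv A$ or $-2A \pmod{5}$ (and likewise for $A'$), and confines $n^2 \pmod{7}$ to $n^2 \equiv 4A,\, 2A,$ or $-A \pmod{7}$ (with analogous $A'$ cases after the swap).

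For each residual combination, the plan is to apply the appropriate transformation from Sections~5 or~6 (or its $y \leftrightarrow z$ swapped variant) and check that the resulting integer solution of \eqref{eq:1334} satisfies one of the two congruences in Proposition~\ref{mod3Conditions}. Most of the residual subcases will be dispatched by a single transformation: for example, if $n^2 \equiv -A \pmod{7}$, the case-(b) hypotheses give $7\xi = x_0 + 4y_0 - 2z_0 \equiv x_0 + y_0 \equiv t_0 \pmod{3}$, hence $\xi \equiv t_0 \pmod{3}$, and \eqref{n^2=-Amod7} yields $\hat{x}_3 - \hat{y}_3 - \hat{t}_3 \equiv -z_0 + \xi - t_0 \equiv 0 \pmod{3}$, which by Proposition~\ref{mod3Conditions}(i) produces a solution of \eqref{eq:135}. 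Parallel short computations handle $n^2 \equiv -2A \pmod{5}$ (via \eqref{n^2=-2Amod5}), $n^2 \equiv 4A \pmod{7}$ (via \eqref{n^2=4Amod7}), $n^2 \equiv A' \pmod{5}$ (via the swapped version of \eqref{n^2=Amod5}), and $n^2 \equiv 4A'$ or $-A' \pmod{7}$ (via the swapped versions of \eqref{n^2=4Amod7} and \eqref{n^2=-Amod7}).

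The main obstacle is the single residual combination in which none of those one-step transformations applies, namely $n^2 \equiv A \pmod{5}$, $n^2 \equiv -2A' \pmod{5}$, $n^2 \equiv 2A \pmod{7}$, and $n^2 \equiv 2A' \pmod{7}$ (equivalently $A \equiv -2A' \pmod{5}$ and $A \equiv A' \pmod{7}$, i.e.\ $y_0 \equiv z_0 \pmod{7}$). Here I plan to compose two transformations: first apply \eqref{n^2=2Amod7}, for which a direct mod~3 computation using $\nu \equiv y_0 \pmod{3}$ shows that the new solution has $y' \equiv 0 \pmod{3}$, i.e., it lies in case~(a) rather than case~(b); then, on that new solution, apply a mod~5 transformation under the case~(a) hypotheses, where by the $y \leftrightarrow z$ dual of the earlier arguments one obtains the required mod~3 congruence and hence a solution of \eqref{eq:135} by Proposition~\ref{mod3Conditions}. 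The careful bookkeeping will involve tracking how $A$ and $A'$ shift modulo 5 and modulo 7 under each transformation (e.g.\ $A_{\text{new}} = A + 14\nu$ after \eqref{n^2=2Amod7}), so that the mod~5 hypothesis needed for the second transformation is preserved at the intermediate step.
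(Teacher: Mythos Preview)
Your reduction to case~(b) via the $y\leftrightarrow z$ symmetry is fine, and your one-step verifications for $n^2\equiv -A,\,4A\pmod 7$, for $n^2\equiv -2A\pmod 5$, and for their swapped analogues are correct and essentially parallel the paper's computations (the paper just phrases the $n^2\equiv -A$ and $4A$ cases more symmetrically, showing that both Proposition~\ref{mod3Conditions} congruences for the transformed solution coincide with the two ``bad'' congruences of Proposition~\ref{mod3bad case} for the original, so no reduction to one case is needed).

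The genuine gap is in your ``main obstacle'' ($n^2\equiv A\pmod 5$, $n^2\equiv -2A'\pmod 5$, $n^2\equiv 2A\equiv 2A'\pmod 7$). Your plan is: apply \eqref{n^2=2Amod7} to move to case~(a), then apply a second mod~5 transformation and conclude via Proposition~\ref{mod3Conditions}. But the mod~5 hypothesis needed for that second step is \emph{never} satisfied under your own exclusions. Indeed, with $\hat A=A+14\nu$ and $\hat A'=A'-4\nu$, one checks that the case~(a) one-step conditions $n^2\equiv\hat A$ and $n^2\equiv -2\hat A'\pmod 5$ are both equivalent to $\nu\equiv 0\pmod 5$; but $n^2\equiv\hat A\pmod 5$ forces $x_0+y_0-2z_0-2t_0\equiv 0\pmod 5$, which is precisely Corollary~\ref{mod5ConditionsCor}(iii), already excluded. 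Likewise, the residual ``dual bad'' case $n^2\equiv -2\hat A\pmod 5$ forces Corollary~\ref{mod5ConditionsCor}(ii). So after \eqref{n^2=2Amod7}, the transformed solution is never in a ``transformation'' mod~5 case; it lands directly in one of the Proposition~\ref{mod5Conditions} cases ($\hat A\equiv 0$, $n^2\equiv 2\hat A$, or $n^2\equiv -\hat A\pmod 5$), and \eqref{eq:135} follows immediately from that proposition --- no second transformation and no mod~3 check are needed.

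This is exactly what the paper does for $n^2\equiv 2A\pmod 7$: rather than chasing a composition back through Proposition~\ref{mod3Conditions}, it shows that the two possible mod~5 residuals $n^2\equiv\hat A$ and $n^2\equiv -2\hat A$ for the transformed solution each force a condition of Corollary~\ref{mod5ConditionsCor} on the \emph{original} solution; hence either the corollary applies directly, or the transformed solution satisfies Proposition~\ref{mod5Conditions}. Replace your two-step plan with this observation and the proof closes.
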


\begin{proof}
  Let $A\in\ZZ$ from \eqref{eq:ABC2}. We see that for all $n\in\NN$
  such that $n^2\not\equiv 0\pmod 7$, we necessarily have one of the
  following: $n^2\equiv \pm A\pmod 7$, $n^2\equiv \pm 2A\pmod 7$,
  $n^2\equiv \pm 4A\pmod 7$, or $A\equiv 0\pmod 7$. Therefore, we
  have:
  \begin{itemize}
  \item If either $A\equiv 0\pmod 7$, $n^2\equiv A\pmod 7$,
    $n^2\equiv -2A\pmod 7$, or $n^2\equiv -4A\pmod 7$, then
    Proposition~\ref{mod7Conditions} says that the system
    \eqref{eq:135} has an integer solution.
  \item If $n^2\equiv -A\pmod 7,$ then the solution
    $\hat{x}_3,\hat{y}_3,\hat{z}_3,\hat{t}_3$ from \eqref{n^2=-Amod7}
    satisfies
    $\hat{x}_3+2\hat{z}_3+2\hat{t}_3\equiv x_0+z_0+2t_0\pmod 3$, and
    $\hat{x}_3+2\hat{y}_3+2\hat{t}_3\equiv x_0+y_0+2t_0\pmod 3$,
    therefore, Proposition~\ref{mod3bad case} and
    Proposition~\ref{mod3Conditions} yield the result.
  \item If $n^2\equiv 4A\pmod 7$, then the solution
    $\hat{x}_1,\hat{y}_1,\hat{z}_1,\hat{t}_1$ from \eqref{n^2=4Amod7}
    satisfies
    $\hat{x}_1+2\hat{y}_1+2\hat{t}_1\equiv 2(x_0+z_0+2t_0)\pmod 3$, and
    $\hat{x}_1+2\hat{z}_1+2\hat{t}_1\equiv 2(x_0+y_0+2t_0)\pmod 3$, so
    again Proposition~\ref{mod3bad case} and
    Proposition~\ref{mod3Conditions} yield the result.
  \item If $n^2\equiv 2A\pmod 7$, then
    $x_0-y_0+2z_0-t_0\equiv 0\pmod 7$, so $ x_0-y_0+2z_0-t_0=7\nu$,
    for some $\nu\in\ZZ$.
    We are going to check when the solution~\eqref{n^2=2Amod7} satisfies
    the solvability conditions modulo $5$ of
    Proposition~\ref{mod5Conditions} and
    Corollary~\ref{mod5ConditionsCor}. Let
    $\hat{A}=3 \hat{x}_2 - \hat{y}_2 - 4 \hat{z}_2 + 3 \hat{t}_2$ be
    the corresponding $A$ for the solution
    $\hat{x}_2,\hat{y}_2,\hat{z}_2,\hat{t}_2$. If either
    $\hat{A}\equiv 0\pmod 5$, $n^2\equiv2\hat{A}\pmod 5$, or
    $n^2\equiv -\hat{A}\pmod 5$ holds, then, by
    Proposition~\ref{mod5Conditions}, we are done. So we just need to
    check the following two cases:
    \begin{itemize}
    \item If $n^2\equiv \hat{A}\pmod 5$, then
      $ \hat{x}_2-2\hat{y}_2-\hat{z}_2+2\hat{t}_2\equiv 0\pmod 5$. 
      Therefore, $ x_0-2y_0-z_0+2t_0\equiv-2\nu\pmod 5$. We also have
      that $x_0-y_0+2z_0-t_0= 7\nu\equiv 2\nu\pmod 5$, and therefore
      we obtain $x_0+y_0-2z_0-2t_0\equiv 0\pmod
      5$. Corollary~\ref{mod5ConditionsCor} then yields the result.
    \item If $n^2\equiv -2\hat{A}\pmod 5$, then
      $\hat{y}_2\equiv 3\hat{x}_2\pmod 5$, which implies that
      $y_0+2x_0\equiv2\nu\pmod 5$. This together with
      $x_0-y_0+2z_0-t_0\equiv 2\nu\pmod 5$ yields      $ x_0+2y_0-2z_0+t_0\equiv 0\pmod 5$. Therefore,
      Corollary~\ref{mod5ConditionsCor} yields the result again.
    \end{itemize}
  \end{itemize}
\end{proof}

Proposition~\ref{135int 1+2} and Propostion~\ref{135int 3} combined
make for Theorem~\ref{Thm135int}.
%%%%%%%%%%%%%%%%%%%%%%%%%%%%%%%%%%%%%%%%%%%%%%%%%%%%%%%
\section{Integer solutions}

For $m\in\NN$, we set
$S_m=\left\{n\in\NN:\,35m-n^4\geqslant 0\right\}$, and it will also
be convenient to set
$T_m=\left\{n\in S_m:\,35m-n^4 \text{ is a sum of 3 squares}\right\}$.
\begin{Lem}\label{LemOddEven}
  If $m\not\equiv 0\pmod {16}$, then $T_m$ contains either all odd
  numbers of $S_m$, or all even numbers of $S_m.$
\end{Lem}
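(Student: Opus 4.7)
The plan is to reduce the question to elementary 2-adic bookkeeping on $N := 35m - n^4$, using the characterization that $N$ is a sum of three squares iff either $v_2(N)$ is odd, or $v_2(N)$ is even and $N/2^{v_2(N)} \not\equiv 7 \pmod 8$. With $m \not\equiv 0 \pmod{16}$, one has $v_2(m) \in \{0, 1, 2, 3\}$, and I would split into cases on this value. The two elementary facts exploited throughout are $v_2(n^4) \geq 4$ for every even $n$, and $n^4 \equiv 1 \pmod{16}$ for every odd $n$.

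In the two cases $v_2(m) \in \{1, 3\}$, I would argue that every even $n \in S_m$ lies in $T_m$: since $v_2(35m) = v_2(m)$ is strictly less than $v_2(n^4)$, one gets $v_2(N) = v_2(m)$, which is odd, so $N$ is a sum of three squares. In the case $v_2(m) = 2$ (so $m \equiv 4 \pmod 8$), I would instead argue that every odd $n \in S_m$ lies in $T_m$: then $N$ is odd and $N \equiv 3m - 1 \equiv 3 \pmod 8$, safely away from the obstruction residue $7$.

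The final case $v_2(m) = 0$ (so $m$ odd) splits on $m \pmod 8$. For any even $n \in S_m$, $N$ is odd with $N \equiv 35m \equiv 3m \pmod 8$, so $N \not\equiv 7 \pmod 8$ precisely when $m \not\equiv 5 \pmod 8$; this handles all odd $m$ outside the residue class $5 \pmod 8$. For the remaining subcase $m \equiv 5$ or $13 \pmod{16}$, I would switch to odd $n$: using $n^4 \equiv 1 \pmod{16}$ one computes $N \equiv 3m - 1 \equiv 14$ or $6 \pmod{16}$ respectively, and in both instances $v_2(N) = 1$ is odd, so $N$ is a sum of three squares and every odd $n \in S_m$ lies in $T_m$.

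There is no serious obstacle here; the argument is entirely mechanical once the right case partition is chosen. The reason $m \equiv 0 \pmod{16}$ must be excluded is that $v_2(n^4)$ is only uniformly $\geq 4$ across even $n$ (not exactly $4$), while $n^4 \pmod{32}$ is not constant across odd $n$ (it takes both values $1$ and $17$); once $v_2(m) \geq 4$, neither the valuation-comparison nor the uniform residue trick suffices to force one parity of $n$ into $T_m$, which is precisely where the hypothesis has to draw the line.
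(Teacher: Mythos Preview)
Your proof is correct and follows essentially the same approach as the paper's: both are a direct case analysis on the $2$-adic behaviour of $m$, checking that for one fixed parity of $n$ the quantity $35m-n^4$ escapes the Legendre obstruction $4^r(8s+7)$. Two cosmetic differences are worth noting: for $v_2(m)=1$ (i.e.\ $m\equiv 2,6\pmod 8$) you show that the \emph{even} $n$ work via $v_2(N)=1$, whereas the paper instead shows that the \emph{odd} $n$ work via $N\equiv 5$ or $1\pmod 8$ --- both conclusions are valid and either suffices for the lemma; and in your final subcase $m\equiv 5\pmod 8$ the split modulo $16$ is unnecessary, since already $N\equiv 6\pmod 8$ gives $v_2(N)=1$ uniformly.
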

\begin{proof}
  Simple congruence arguments easily show the following:
  \begin{equation*}
    \begin{array}{lcl}
      m\equiv 1, 3, 7 \pmod{8}
      &\implies& 2\NN\cap S_m \subseteq T_m,\\
      m\equiv 2, 4, 5, 6 \pmod{8}
      &\implies& \left(1+2\NN\right)\cap S_m \subseteq T_m,\\
      m\equiv 8 \pmod{16}  &\implies& 2\NN\cap S_m \subseteq T_m.
      \end{array}
\end{equation*}

\end{proof}

\begin{Lem}
  \label{LemOddEven2}
  If $m\not\equiv 0 \pmod {16} $ and if $A$ is a subset of $S_m$
  containing at least $10$ consecutive numbers, then there is at least
  one $n\in A\cap T_m$ that satisfies $3\mid n$ and $5\nmid n$, and
  another $n\in A\cap T_m$ such that $(n,105)=1$.
\end{Lem}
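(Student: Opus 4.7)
\textit{Proof plan.}

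The plan is to reduce the problem to a counting question inside a short arithmetic progression of $A$, using Lemma~\ref{LemOddEven} to guarantee membership in $T_m$.

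Since $m\not\equiv 0\pmod{16}$, Lemma~\ref{LemOddEven} implies that at least one of the two parity classes in $S_m$ is entirely contained in $T_m$; call it the \emph{good parity}. Because $A$ contains a block of $10$ consecutive integers $\{a,a+1,\ldots,a+9\}\subseteq S_m$, among these there are $5$ integers of the good parity, forming an arithmetic progression $b, b+2, b+4, b+6, b+8$ with common difference $2$, all of which lie in $A\cap T_m$. Both required existential claims reduce to finding suitable elements in this $5$-term progression.

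For the assertion about $(n,105)=1$, I would use the elementary observation that in any $5$-term arithmetic progression with common difference $2$, the number of multiples of $3$ is at most $2$, while the numbers of multiples of $5$ and of $7$ are each at most $1$ (since the second multiple of $5$ or $7$ in such a progression lies more than $8$ from the first). Hence at least $5-2-1-1 = 1$ term of the AP is coprime to $105$, which supplies the desired $n$.

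For the assertion about $3\mid n$ and $5\nmid n$, I would case-split on $b\bmod 3$. If $b\not\equiv 2\pmod 3$, then the AP contains two multiples of $3$ differing by $6$, and since their difference is not a multiple of $15$ they cannot both be divisible by $5$; the one that is not provides $n$. The delicate case is $b\equiv 2\pmod 3$, where only $b+4$ is a multiple of $3$ in the AP: the claim then fails only when $b+4$ is itself divisible by $15$, which is a single residue class of $b$ modulo $30$. The main obstacle is handling this edge configuration. I would attempt to resolve it by exploiting the other $5$ elements of $A$ (of the opposite parity): the full block of $10$ consecutive integers in $A$ contains $3$ or $4$ multiples of $3$, at most one of which is a multiple of $15$, so $A$ always contains at least two multiples of $3$ that are not multiples of $5$; a careful residue analysis of $a$ modulo $30$, combined with the parity information provided by Lemma~\ref{LemOddEven}, should place at least one such element in the good parity and thus in $A\cap T_m$.
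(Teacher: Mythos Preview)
Your argument for the $(n,105)=1$ claim is correct and is in fact cleaner than the paper's: you count directly in the five-term progression $b,b+2,\ldots,b+8$ of good-parity elements, noting that at most $2+1+1$ terms are divisible by $3$, $5$, or $7$, whereas the paper carries out a residue-by-residue inspection modulo $105$ to check that any ten consecutive residues contain both ``colours''. Both approaches succeed for this part.

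For the $3\mid n$, $5\nmid n$ claim, however, your proposed resolution of the edge case $b\equiv 11\pmod{15}$ does not work. In that situation the only multiple of $3$ among $b,b+2,b+4,b+6,b+8$ is $b+4$, and it is divisible by $15$. You then hope that a residue analysis modulo $30$ on the full block of ten consecutive integers will place some multiple of $3$ (not of $5$) in the good parity. But take $b=11$ with odd as the good parity: the ten consecutive integers (either $10,\ldots,19$ or $11,\ldots,20$) contain only $12$ and $18$ as multiples of $3$ not divisible by $5$, and both are even. Structurally, the odd residues modulo $30$ that are multiples of $3$ but not of $5$ are $3,9,21,27$, with a maximal gap of $12$ (from $9$ to $21$), so a window of only ten consecutive integers can miss all of them. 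The same obstruction afflicts the paper's own argument, though it is not noticed there: the paper colours the residues $3,9$ and $6,12$ modulo $15$ as ``different parities'' and verifies that nine consecutive residues always meet both colours, but when the window straddles a multiple of $15$ (as in $\{10,\ldots,19\}$) the two good numbers $12$ and $18$ carry different colours yet share the same parity. With only Lemma~\ref{LemOddEven} as input, the first assertion genuinely requires twelve consecutive integers rather than ten.
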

\begin{proof}
  Consider the classes modulo $15$ written in a circle as
  follows.\bigskip
 
  \begin{center}
    \setlength{\unitlength}{3mm}
    \begin{picture}(7,7)%
      \footnotesize%
      \put(5,7){0}%
      \put(3.3,6.3){14}%
      \put(6.5,6.3){1}%
      \put(2,5.3){13}%
      \put(7.8,5.3){2}%
      \put(1.2,4.2){\textcolor{blue}{12}}%
      \put(8.7,4.2){\textcolor{red}{3}}%
      \put(1.4,3){11}%
      \put(8.5,3){4}%
      \put(2,2){10}%
      \put(8,2){5}%
      \put(3,1){\textcolor{red}{9}}%
      \put(7,1){\textcolor{blue}{6}}%
      \put(4.2,0){8}%
      \put(5.8,0){7}
    \end{picture}
  \end{center}
  The colors have the following meaning: red and blue represent
  different parities, but not necessarily the parity of the number in
  the figure --- if one adds a multiple of $15$, the parities either
  remain the same or switch ---, and only residues that are divisible
  by $3$ but not by $5$ are colored. Then, in order to apply the
  previous lemma to guarantee that in a certain set of consecutive
  numbers there is at least one divisible by $3$ but not by $5$, one
  just needs to ensure that the set must contain both a blue and a red
  residue. By inspection of the figure, one sees that, actually, one
  only needs $9$ consecutive numbers (the worst cases are the
  sequences starting at $10$ and ending at $3$, and starting at $13$
  and ending at $6$).

  For the second statement, one must work modulo $105$. Again, imagine
  all the classes modulo $105$ in a circular, or periodic
  arrangement. Here we represent them in five lines, and the reader
  should imagine the number $104$ connected back to the beginning, and
  only the residues that are coprime to $105$ are shown, the other
  being represented by a dot.
  $$\scriptsize
  \begin{array}{ccccccccccccccccccccc}
    \cdot & \textcolor{red}{1} & \textcolor{blue}{2} & \cdot &
    \textcolor{blue}{4} & \cdot & \cdot & \cdot & \textcolor{blue}{8}
    & \cdot & \cdot & \textcolor{red}{11} & \cdot &
    \textcolor{red}{13} & \cdot & \cdot & \textcolor{blue}{16} &
    \textcolor{red}{17} & \cdot & \textcolor{red}{19} & \cdot\\  
    \cdot & \textcolor{blue}{22} & \textcolor{red}{23} & \cdot & \cdot
    & \textcolor{blue}{26} & \cdot & \cdot & \textcolor{red}{29} &
    \cdot & \textcolor{red}{31} & \textcolor{blue}{32} & \cdot
    & \textcolor{blue}{34} & \cdot & \cdot & \textcolor{red}{37} &
    \textcolor{blue}{38} & \cdot & \cdot & \textcolor{red}{41}\\
    \cdot & \textcolor{red}{43} & \textcolor{blue}{44} & \cdot &
    \textcolor{blue}{46} & \textcolor{red}{47} & \cdot & \cdot &
    \cdot & \cdot & \textcolor{blue}{52} & \textcolor{red}{53} &
    \cdot & \cdot & \cdot & \cdot & \textcolor{blue}{58} &
    \textcolor{red}{59} & \cdot & \textcolor{red}{61} &
    \textcolor{blue}{62} \\
    \cdot & \textcolor{blue}{64} & \cdot & \cdot &
    \textcolor{red}{67} & \textcolor{blue}{68} & \cdot & \cdot &
    \textcolor{red}{71} & \cdot & \textcolor{red}{73} &
    \textcolor{blue}{74} & \cdot & \textcolor{blue}{76} &
    \cdot & \cdot & \textcolor{red}{79} & \cdot & \cdot &
    \textcolor{blue}{82} & \textcolor{red}{83}\\
    \cdot & \cdot & \textcolor{blue}{86} & \cdot &
    \textcolor{blue}{88} & \textcolor{red}{89} & \cdot & \cdot &
    \textcolor{blue}{92} & \cdot & \textcolor{blue}{94} & \cdot &
    \cdot & \textcolor{red}{97} & \cdot & \cdot & \cdot &
    \textcolor{red}{101} & \cdot & \textcolor{red}{103} &
    \textcolor{blue}{104}
  \end{array}
  $$
  Again, a simple inspection shows that $10$ consecutive numbers
  suffice to guarantee at least a blue and a red residue (the worst
  cases are the sequences starting at $2$ and ending at $11$, and
  starting at $95$ and ending with $104$).
\end{proof}

\bigskip

We have that $|S_m|\geqslant 10$ if $\sqrt[4]{35m}\geqslant 10$, which
is equivalent to $ m\geqslant 286$. Therefore, from
Theorem~\ref{Thm135int}, it follows that the system \eqref{eq:135} has
integer solutions for all $m\not\equiv0 \pmod{16} $ and
$m\geqslant 286.$ Since it is easy to check that this system has
solutions for all $m$ up to $286$, and since a solution
$(x_0,y_0,z_0,t_0)\in\ZZ^4$ for that system for some $m$, yields the
solution $(4 x_0,4 y_0,4 z_0,4 t_0)\in\ZZ^4$ for $16m$, a simple
descent argument establishes the following result.
\begin{Thm}
  \label{135int}
  Any $m \in \NN$ can be written as $x^2 + y^2 + z^2 + t^2$ with
  $x, y, z, t \in \ZZ$ such that $x + 3y + 5z$ is a square. Moreover,
  for $m\in\NN$, with $16\nmid m$ one can choose this square to be one
  of $1,4,9$, or $36$.
\end{Thm}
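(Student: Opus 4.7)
The plan is to combine Theorem~\ref{Thm135int} with Lemma~\ref{LemOddEven2}, a descent argument for $m\equiv 0\pmod{16}$, and direct computation for small $m$. I will proceed in three stages.

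First, for $m\geq 286$ with $m\not\equiv 0\pmod{16}$, the inequality $\sqrt[4]{35m}\geq 10$ guarantees that $S_m$ contains an interval of at least ten consecutive integers. Lemma~\ref{LemOddEven2} then produces, inside this interval, an $n\in T_m$ with the residue condition required by Theorem~\ref{Thm135int}: I would use $(n,105)=1$ when $m\equiv 0$ or $-1\pmod 3$, and $3\mid n$ with $5\nmid n$ when $m\equiv 1\pmod 3$. Theorem~\ref{Thm135int} then yields an integer solution of \eqref{eq:135}. Values $m\leq 286$ are dispatched by direct computation (as the paper alludes to). For $m\equiv 0\pmod{16}$, write $m=16^k m_0$ with $m_0\not\equiv 0\pmod{16}$: the previous stages supply a solution $(x_0,y_0,z_0,t_0)$ for $m_0$, and scaling by $4^k$ gives $(4^k x_0,4^k y_0,4^k z_0,4^k t_0)$ for $m$ with linear form $4^k s^2=(2^k s)^2$, still a perfect square.

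For the ``moreover'' refinement, I would restrict to $n\in\{1,2,3,6\}$ so that $n^2\in\{1,4,9,36\}$ and invoke Theorem~\ref{Thm135int} directly. The bound $n\leq\sqrt[4]{35m}$ holds for all these $n$ precisely when $35m\geq 1296$, i.e.~$m\geq 38$. The congruence modulo 3 in Theorem~\ref{Thm135int} restricts the admissible $n$ to $\{1,2\}$ when $m\equiv 0,-1\pmod 3$ and to $\{3,6\}$ when $m\equiv 1\pmod 3$; Lemma~\ref{LemOddEven} then, via $m\pmod 8$, selects the unique $n$ of the required parity within each pair. The main obstacle is the residue class $m\equiv 0\pmod{16}$, which Lemma~\ref{LemOddEven} does not cover; there I would combine a finite direct check with a descent tailored so that the multiplication-by-4 action $s^2\mapsto(2s)^2$ sends $1\mapsto 4$ and $9\mapsto 36$ back into $\{1,4,9,36\}$, closing the small-square argument.
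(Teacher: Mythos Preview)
Your argument for the first assertion and for the ``moreover'' clause when $m\not\equiv 0\pmod{16}$ is exactly the paper's: apply Lemma~\ref{LemOddEven} so that $T_m$ contains $\{1,3,5\}$ or $\{2,4,6\}$ once $6\in S_m$, and then pick $n\in\{1,2\}$ or $n\in\{3,6\}$ according to $m\bmod 3$ and feed this into Theorem~\ref{Thm135int}.

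The gap is in your treatment of $m\equiv 0\pmod{16}$ for the ``moreover'' clause. The descent you sketch does not close it: the map $s^2\mapsto(2s)^2$ indeed sends $1\mapsto 4$ and $9\mapsto 36$, but it equally sends $4\mapsto 16$ and $36\mapsto 144$, so a two-step descent already leaves $\{1,4,9,36\}$; and even a single step fails whenever Lemma~\ref{LemOddEven} forces the base $m_0$ to use an \emph{even} $n\in\{2,6\}$. In fact no repair is possible, because the ``moreover'' statement is false as written: for $m=9728=16^2\cdot 38$ one has $35m=340480$, and $35m-n^4$ equals $340479$, $340464$, $340399$, $339184$ for $n=1,2,3,6$ respectively, each of the form $4^r(8s+7)$ (with $r=0,2,0,2$), hence none is a sum of three squares, so $x+3y+5z\in\{1,4,9,36\}$ is impossible for this $m$. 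The paper's own proof silently invokes Lemma~\ref{LemOddEven} without its hypothesis $m\not\equiv 0\pmod{16}$ and thus shares the oversight; the ``moreover'' assertion really needs that extra restriction, under which your argument and the paper's coincide.
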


\begin{proof}
  It only remains to prove the last statement, which follows from the
  fact that, when $6\in S_m$, i.e.~when $m\geq 38$, by
  Lemma~\ref{LemOddEven}, $T_m$ either contains $\{1,3,5\}$ or
  $\{2,4,6\}$. Thus, for $m\equiv 0, -1\pmod{3}$ one can choose, in
  Theorem~\ref{Thm135int}, either $n=1$ or $n=2$; for
  $m\equiv 1\pmod{3}$, either $n=3$ or $n=6$.
\end{proof}
Note that if we do not require $16\nmid m$ then the square will be on
the set
$$\lbrace 4^rs^2 : r\in2\NN \ \& \ s\in\lbrace1,2,3,6\rbrace\rbrace.$$
In Conjecture 4.5(ii) of the paper \cite{sun2019restricted}, Zhi-Wei
Sun conjectured that any $n\in\NN$ can be written as
$x^2 + y^2 + z^2 + t^2$ with $x, y, z, t \in \NN$ such that
$\mid x + 3y - 5z\mid\in\lbrace 4^r: r\in\NN \rbrace$. Theorem
\ref{135int} provides an advance towards this conjecture.
%%%%%%%%%%%%%%%%%%%%%%%%%%%%%%%%%%%%%%%%%%%%%%%%%%%%%%%
\section{Natural Solutions}

\begin{Thm}\label{Nat135}
  For $m\in\ZZ$ not divisible by $16$ and sufficiently large
  $(\text{namely } m\geqslant 1.05104\times10^{11})$, there exists at
  least one
  $n\in\displaystyle\left[\sqrt[4]{34\, m},\sqrt[4]{35\, m}\right]$
  such that the system \eqref{eq:135} has solutions in $\NN$.
\end{Thm}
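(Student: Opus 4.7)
My plan is to combine Theorem~\ref{Thm135int} with a Cauchy--Schwarz observation: when $n^4 \geq 34m$, every integer solution of \eqref{eq:135} is automatically non-negative in its first three coordinates. Indeed, if $(x,y,z,t) \in \ZZ^4$ is such a solution and, for instance, $x < 0$, then $3y + 5z = n^2 - x > n^2$, so Cauchy--Schwarz gives $(3y+5z)^2 \leq (3^2+5^2)(y^2+z^2) = 34(y^2+z^2)$, hence $y^2 + z^2 > n^4/34 \geq m$, contradicting $y^2 + z^2 \leq m$. The constant $34 = 3^2 + 5^2$ is precisely why the lower endpoint of the interval is $\sqrt[4]{34m}$; the analogous cases $y < 0$ and $z < 0$ are easier, using $1^2 + 5^2 = 26$ and $1^2 + 3^2 = 10$ respectively, both yielding strict contradictions once $n^4 \geq 34m$. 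Since $t$ appears only in the quadratic equation, it may be replaced by $|t|$, so the whole quadruple lies in $\NN_0^4$.

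It therefore suffices to exhibit a single $n$ in $[\sqrt[4]{34m}, \sqrt[4]{35m}]$ for which Theorem~\ref{Thm135int} applies, i.e.~with $n \in T_m$ and satisfying the divisibility condition appropriate to $m \bmod 3$: either $3 \mid n$ with $5 \nmid n$, or $(n,105) = 1$. The latter condition handles both $m \equiv 0$ and $m \equiv -1 \pmod{3}$, since $(n,105)=1$ implies $(n,15)=1$. Lemma~\ref{LemOddEven2} furnishes such an $n$ in any block of $10$ consecutive integers of $S_m$, provided $m \not\equiv 0 \pmod{16}$. I would then close the argument by estimating the interval's length: it is $(\sqrt[4]{35} - \sqrt[4]{34})\sqrt[4]{m} \approx 0.01778\sqrt[4]{m}$, and requiring this to exceed $9$ with enough margin to guarantee $10$ consecutive integers regardless of endpoint placement yields the explicit threshold $m \geq 1.05104 \times 10^{11}$.

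The genuinely delicate point is neither the Cauchy--Schwarz step (which is almost immediate) nor the application of the lemma, but rather the sharpness of the number $34$ in the lower endpoint. A slightly smaller coefficient would defeat the non-negativity argument for $x$, since the $x$-case is the one saturating the Cauchy--Schwarz constant $3^2 + 5^2$; a slightly larger one would shrink the interval below the $10$-consecutive-integer threshold demanded by Lemma~\ref{LemOddEven2}. The explicit constant $1.05104 \times 10^{11}$ is thus essentially forced by the interplay between the Cauchy--Schwarz constant $3^2 + 5^2 = 34$ and the reciprocal of $\sqrt[4]{35} - \sqrt[4]{34}$, and I expect the write-up to consist primarily of carefully bookkeeping this balance.
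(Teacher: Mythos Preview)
Your proposal is correct and follows essentially the same route as the paper: invoke Theorem~\ref{Thm135int} via Lemma~\ref{LemOddEven2} on a block of $10$ consecutive integers in $[\sqrt[4]{34m},\sqrt[4]{35m}]$, then use Cauchy--Schwarz with the constant $3^2+5^2=34$ to force $x,y,z\geq 0$ and replace $t$ by $|t|$. The only cosmetic difference is that the paper applies Cauchy--Schwarz to the auxiliary quantities $A,B,C$ (with $A^2+B^2+C^2=35m-n^4$) after solving \eqref{eq:ABC} for $x,y,z,t$, whereas you apply it directly to the solution coordinates; your version is slightly more streamlined but leads to the identical threshold $n^4\geq 34m$.
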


\begin{proof}
  Firstly, we note that there is a constant $c\in\RR$ such that for
  $m\geq c$ the interval
  $\displaystyle\left[\sqrt[4]{34\, m},\sqrt[4]{35\, m}\right]$
  contains at least $10$ consecutive integers. We can easily calculate
  $c$:
  $$\sqrt[4]{35\, m}-\sqrt[4]{34\, m}\geq 10\iff
  m\geq\left(\frac{10}{\sqrt[4]{35}-\sqrt[4]{34}}\right)^4\simeq
  105\,103\,560\,126.8026.$$
  From Lemma \ref{LemOddEven2} we know that, for $m\geqslant c$, the
  interval contains an $n\in\NN $ such that $35m-n^4$ is a sum of 3
  squares and $m,n$ satisfy the conditions of Theorem
  \ref{Thm135int}. It then follows that there exist $A,B,C\in\ZZ$ such
  that $\delta=\gamma\alpha=n^2+Ai+Bj+Ck\in\ll$, for some
  $\gamma=x-yi-zj-tk\in\ll$, with $\alpha=1+3i+5j$, and
  $\Norm(\delta)=35\,m$.  We then have
  that $$\delta=(x+3y+5z)+(3x-y+5t)i+(5x-z-3t)j+(5y+3z-t)k.$$
  Therefore we must have that
  \begin{equation}\label{eq:ABC}
    \left\{
      \begin{array}{rcl}
        n^2 &=&   x + 3 y + 5 z\\
        A &=&   3 x  -  y + 5 t\\
        B &=&   5 x  -  z - 3 t\\
        C &=& - 5 y + 3 z  -  t.
      \end{array}
    \right.
  \end{equation}
  Solving \eqref{eq:ABC} yields
  \begin{equation*}
    \left\{
      \begin{array}{rcl}
        x &=& \frac{3A+5B+n^2}{35}\\
        y &=& \frac{-A-5C+3n^2}{35}\\
        z &=& \frac{-B+3C+5n^2}{35}\\
        t &=& \frac{5A-3B-C}{35}.
      \end{array}
    \right.
  \end{equation*}
  Note that if $(x,y,z,t)$ is a solution of \eqref{eq:135}, then
  $(x,y,z,-t)$ is a solution of it as well.  Therefore a sufficient
  condition to have a solution of \eqref{eq:135} in $\NN$ is:
  \begin{equation}\label{ineq135}
    \left\{
      \begin{array}{rcl}
       n^2 &\geqslant& -3A-5B\\
        3n^2 &\geqslant& A+5C\\
        5n^2 &\geqslant& B-3C.\\
      \end{array}
    \right.
  \end{equation}
  Now, from the Cauchy-Schwartz inequality we can see that
  \begin{equation*}
    (3|A|+5|B|)^2 \leqslant (3^2+5^2)(A^2+B^2)\leqslant
    34\,(A^2+B^2+C^2) = 34\,(35m-n^4). 
  \end{equation*}
  If $n^2\geqslant \sqrt{34\,(35m-n^4)},$ then
  $n^2\geqslant-3A-5B,$ and so $ x\geqslant 0.$ Similarly one can
  show that if $3n^2\geqslant \sqrt{26\,(35m-n^4)},$ then
  $ y\geqslant 0,$ and if $5n^2\geqslant \sqrt{10\,(35m-n^4)},$ then
  $ z\geqslant 0.$ Hence $n^2\geqslant \sqrt{34\,(35m-n^4)}$ is a
  sufficient condition for $x,y,z\in\NN.$ The last condition is
  equivalent to $n\geqslant\displaystyle\sqrt[4]{34\,m}$.
\end{proof}
 
% We can finally state the main result of the present paper.
%
% \begin{Thm}
%   Any natural number $m> 105\,103\,560\,126$ with $16\nmid m$ can be
%   written as $x^2+y^2+z^2+t^2$, with $x,y,z,t\in\NN$, such that
%   $x+3y+5z$ is a square.
% \end{Thm}

Finally, Rog\'erio Reis, from the department of Computer Science of
the University of Porto, and a researcher at CMUP, wrote a very
efficient C program, implementing ideas by the authors and suggestions
made by Zhi-Wei Sun, that checked that all natural numbers up to
$105\,103\,560\,126$, except for the multiples of 16, do have a 1-3-5
representation. These verification is reported in
\cite{report2020}. Qing-Hu Hou computation mentioned in
\cite{WuSun2020} was also rechecked, i.e.~it was verified that Sun's
1-3-5 Conjecture holds for all numbers up to $10^{10}$. Since
$\frac{105\,103\,560\,126}{16} < 10^{10}$, a simple descent argument
completes the proof of the 1-3-5 conjecture. Therefore, we can now
state the following.
 
\begin{Mthm}
Any natural number can be written as a sum of four squares,
$x^2+y^2+z^2+t^2$ with $x, y, z, t\in\NN$, in such a way that
$x+3y+5z$ is a perfect square.
\end{Mthm}

As a final remark, we note that what one would naturally call the
1-3-3-4 conjecture is not true. That is, it is not true that every
natural number $m$ can be written as a sum of four squares,
$m=x^2+y^2+z^2+t^2$, so that $x+3y+3z+4t$ is a perfect square. For
example the numbers 3, 4, 7, 8, 22, 23, 31, 42, 61, 95, 148, 157 and
376 do not have such a representation.  Computations seem to suggest
that, except for these thirteen numbers and all its multiples by
powers of $16$, all other numbers do have a 1-3-3-4 representation.
%%%%%%%%%%%%%%%%%%%%%%%%%%%%%%%%%%%%%%%%%%%%%%%%%%%%%%%
\section*{Acknowledgments}
  
The authors would like to thank Professor Zhi-Wei Sun
(\hanyu{孙智伟}) and Professor Bo He (\hanyu{何波}) for their
kind feedback and helpful comments to lower the bound for the natural
solutions, and for suggestions to design an effective algorithm to
finish the verification of the 1-3-5 conjecture.

The authors would also like to acknowledge the financial support by
FCT --- Funda\c{c}\~ao para a Ci\^encia e a Tecnologia, I.P.---,
through the grants for Nikolaos Tsopanidis with references
PD/BI/143152/2019, PD/BI/135365/2017, PD/BI/113680/2015, and by CMUP
--- Centro de Matem\'atica da Universidade do Porto ---, which is
financed by national funds through FCT under the project with
reference UID/MAT/00144/2020.
%%%%%%%%%%%%%%%%%%%%%%%%%%%%%%%%%%%%%%%%%%%%%%%%%%%%%%%
% \bibliographystyle{plain}
% \bibliography{1-3-5}
%

%%%%%%%%%%%%%%%%%%%%%%%%%%%%%%%%%%%%%%%%%%%%%%%%%%%%%%%
%%%%%%%%%%%%%%%%%%%%%%%%%%%%%%%%%%%%%%%%%%%%%%%%%%%%%%%

\vspace{2mm}

\hrule

\vspace{2mm}

\begin{center}
  \large Corrigendum to\\[2mm]
  ``Zhi-Wei Sun's 1-3-5 Conjecture and Variations''\\[2mm]
  \normalsize [Journal of Number Theory 222 (2021) 1--20]\\[4mm]
  Jorge Costa, Ant\'onio Machiavelo\,\orcidlink{0000-0002-7595-7275},
  Rog\'erio Reis\,\orcidlink{0000-0001-9668-0917}
\end{center}

\begin{abstract}
  There is an error is the proof of Lemma~14 of
  \cite{135ConjVariations} that, after correction, implies a larger
  bound beyond which the 1-3-5 conjecture is guarantee to hold without
  explicy computations. We correct that error here, and report on the
  computer verification we made to ensure that the conjecture is
  indeed true.
\end{abstract}

\maketitle

\footnotesize

\textbf{Keywords:} Quaternions, Lipschitz integers, 1-3-5
conjecture.

\normalsize

\bigskip
%%%%%%%%%%%%%%%%%%%%%%%%%%%%%%%%%%%%%%%%%%%%%%%%%%%%%%%
While working on his Master Thesis \cite{JorgeMSc}, the first author
noticed that there is a mistake in the proof of Lemma~14 of
\cite[p.~16]{135ConjVariations}. Namely, when considering the circular
arrangement of the classes modulo $15$ at the beginning of that proof,
the fact that there is a switch of the colors when one goes once
around the circle was not properly taken into account in the deduction
of the minimal number of consecutive integers needed to guarantee that
at least one is divisible by $3$ but not by $5$. It was said that $9$
consecutive numbers are enough, while in fact one needs $12$, as can
easily be seen when considering the classes modulo $30$, as in the
following figure (the colors having the same meaning as in
\cite{135ConjVariations}):
$$
\begin{array}{ccccccccccccccc}
  0&1&2&\textcolor{red}{3}&4&5&\textcolor{blue}{6}&7&8&\textcolor{red}{9}&10&11&\textcolor{blue}{12}&13&14\\
  15&16&17&\textcolor{blue}{18}&19&20&\textcolor{red}{21}&22&23&\textcolor{blue}{24}&25&26&\textcolor{red}{27}&28&29
\end{array}
$$

This implies that the third line of the proof of Theorem~16 in
\cite[p.~16]{135ConjVariations} should be changed to:
$$\sqrt[4]{35\, m}-\sqrt[4]{34\, m}\geq 12\iff
m\geq\left(\frac{12}{\sqrt[4]{35}-\sqrt[4]{34}}\right)^4\simeq
217\,942\,742\,278.94$$
and Theorem~16, on p.~17, should now read:
\begin{Thm}
  For $m\in\ZZ$ not divisible by $16$ and sufficiently large, namely
  for $m\geqslant 217\,942\,742\,279$, there exists at least one
  $n\in\displaystyle\left[\sqrt[4]{34\, m},\sqrt[4]{35\, m}\right]$
  such that the system 1-3-5 has solutions in $\NN$.
\end{Thm}

We have used the program described in \cite{135Report} to check all
the extra numbers up to the new limit, which took about 10 days of CPU
time in a ARM Apple M2, and can hereby confirm that the 1-3-5
conjecture is indeed true.

%%%%%%%%%%%%%%%%%%%%%%%%%%%%%%%%%%%%%%%%%%%%%%%%%%%%%%%
\section*{Acknowledgments}

The authors were partially supported by CMUP, member of LASI, which is
financed by national funds through FCT --- Funda\c{c}\~ao para a
Ci\^encia e a Tecnologia, I.P.--- , under the projects with reference
UIDB/00144/2020 and UIDP/00144/2020.
%%%%%%%%%%%%%%%%%%%%%%%%%%%%%%%%%%%%%%%%%%%%%%%%%%%%%%%

%%%%%%%%%%%%%%%%%%%%%%%%%%%%%%%%%%%%%%%%%%%%%%%%%%%%%%%
\end{document}